\newcommand{\R}{{\mathbb{R}}}
\newcommand{\N}{{\mathbb{N}}}
\newcommand{\eps}{\varepsilon}
\newcommand{\conv}{{\mathrm{conv}}}
\newcommand{\bea}{\begin{eqnarray*}}
\newcommand{\eea}{\end{eqnarray*}}
\newcommand{\beq}{\begin{equation}}
\newcommand{\eeq}{\end{equation}}
\newcommand{\norm}[1]{\left\Vert#1\right\Vert}
\numberwithin{equation}{section}
\theoremstyle{plain}
\newtheorem{theorem}{Theorem}[section]
\newtheorem{prop}[theorem]{Proposition}
\newtheorem{lemma}[theorem]{Lemma}
\newtheorem{probl}[theorem]{Problem}
\theoremstyle{definition}
\newtheorem{remark}[theorem]{Remark}
\newtheorem{definition}[theorem]{Definition}
\renewcommand{\leq}{\leqslant}
\renewcommand{\geq}{\geqslant}
\renewcommand{\le}{\leqslant}
\renewcommand{\ge}{\geqslant}
\begin{document}

\title[Quantitative version of the Bishop-Phelps-Bollob\'{a}s theorem for operators]{Quantitative version of the Bishop-Phelps-Bollob\'{a}s theorem for operators with values in a space with the property $\beta$}
\author[Kadets]{Vladimir Kadets}
\author[Soloviova]{Mariia Soloviova}

\address[Kadets]{Department of Mathematics and Informatics, Kharkiv V.~N.~Karazin National University, pl.~Svobody~4,
61022~Kharkiv, Ukraine
\newline
\href{http://orcid.org/0000-0002-5606-2679}{ORCID: \texttt{0000-0002-5606-2679} } }
\email{vova1kadets@yahoo.com}

\address[Soloviova]{Department of Mathematics and Informatics, Kharkiv V.~N.~Karazin National University, pl.~Svobody~4,
61022~Kharkiv, Ukraine
\newline
\href{http://orcid.org/0000-0002-3777-5286}{ORCID: \texttt{0000-0002-3777-5286} } }

\email{mariiasoloviova93@gmail.com}

%\thanks{First author partially supported by Spanish grants:  Junta de Andaluc\'{\i}a and FEDER grant FQM-185, MINECO grant MTM2014-57838-C2-1-P and Fundaci\'on S\'eneca, Regi\'on de Murcia  grant 19368/PI/14. Second author partially supported by Spanish MINECO and FEDER project no.\ MTM2012-31755 and by Junta de Andaluc\'{\i}a and FEDER grant FQM-185. Third author partially supported by a grant from Akhiezer's Fund, 2015.}

\subjclass[2010]{Primary 46B04; Secondary 46B20, 46B22, 47A30}

\keywords{Bishop-Phelps-Bollob\'{a}s theorem; norm-attaining operators; property $\beta$ of Lindenstrauss.}

\date{2017}
\begin{abstract}

The  Bishop-Phelps-Bollob\'{a}s property for operators deals with simultaneous approximation of an operator $T$ and a vector $x$ at which $T: X\rightarrow Y$ nearly attains its norm by an operator $F$ and a vector $z$, respectively, such that $F$ attains its norm at $z$. We study the possible estimates from above and from below for parameters that measure the rate of approximation in the Bishop-Phelps-Bollob\'{a}s property for operators for the case of $Y$ having the property $\beta$ of Lindenstrauss.
\end{abstract}
\maketitle

%%%%%%%%%%%%%%%%
%%%%%%%%%%%%%%%%

\section{Introduction}
In this paper $X$, $Y$ are real Banach spaces,  $L(X,Y)$ is the space of all bounded linear operators $T\colon X \to Y$, $L(X) = L(X,X)$, $X^* = L(X,\R)$, $B_X$ and $S_X$ denote the closed unit ball and the unit sphere of $X$, respectively. 
A functional $x^* \in X^*$ \emph{attains its norm}, if there is $x \in S_X$ with $x^*(x) = \|x^*\|$.  
The \emph{Bishop-Phelps theorem} \cite{Bishop-Phelps} (see also \cite[Chapter 1, p. 3]{Diestel}) says that the set of norm-attaining functionals is always dense in $X^*$. In  \cite{Bollobas} B.~Bollob\'{a}s  remarked that in fact the  Bishop-Phelps construction allows to approximate at the same time a functional and a vector in which it almost attains the norm. Nowadays this very useful fact is called the \emph{ Bishop-Phelps-Bollob\'{a}s theorem}.  Recently, two moduli have been introduced \cite{Modul2014} which measure, for a given Banach space, what is the best possible Bishop-Phelps-Bollob\'{a}s theorem in that space. We will use the following notation:
$$
\Pi(X):=\bigl\{(x,x^*)\in X\times X^*\,:\,
\|x\|=\|x^*\|=x^*(x)=1\bigr\}.
$$
\begin{definition}[Bishop-Phelps-Bollob\'{a}s moduli, \cite{Modul2014}] \label{def1.2bpb-mod}  Let $X$ be a Banach space. The \emph{Bishop-Phelps-Bollob\'{a}s modulus} of $X$ is the function $\Phi_X:(0,2)\longrightarrow \R^+$ such that given $\eps\in (0,2)$, $\Phi_X(\eps)$ is the infimum of those $\delta>0$ satisfying that for every $(x,x^*)\in B_X\times B_{X^*}$ with $ x^*(x)>1-\eps$, there is $(y,y^*)\in \Pi(X)$ with  $\|x-y\|<\delta$ and $\|x^*-y^*\|<\delta$. 
Substituting $(x,x^*)\in S_X\times S_{X^*}$ instead of $(x,x^*)\in B_X\times B_{X^*}$  in the above sentence, we obtain the definition of the \emph{spherical Bishop-Phelps-Bollob\'{a}s modulus}   $\Phi^S_X(\eps)$. 
\end{definition}
Evidently, $\Phi^S_X(\eps)\leq\Phi_X(\eps)$.
There is a common upper bound for $\Phi_X(\cdot)$ (and so for $\Phi_X^S(\cdot)$) for all Banach spaces which is actually sharp. Namely \cite{Modul2014},  for every Banach space $X$ and every $\eps\in (0,2)$ one has
$\Phi_X(\eps)\leq \sqrt{2\eps}$. In other words, this leads to the following improved version of the Bishop-Phelps-Bollob\'{a}s theorem.

\begin{prop}[\mbox{\cite[Corollary~2.4]{Modul2014}}] \label{Prop1.2}
Let $X$ be a Banach space and $0<\eps<2$. Suppose that $x\in B_X$ and $x^*\in B_{X^*}$ satisfy $x^*(x) > 1- \eps$. Then, there exists $(y,y^*)\in \Pi(X)$ such that $\|x-y\|<\sqrt{2\eps}$ and $\|x^*-y^*\|<\sqrt{2\eps}$.
\end{prop}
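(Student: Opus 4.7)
The plan is to derive Proposition~\ref{Prop1.2} from the classical Bishop--Phelps support-point construction, specialized so that the resulting primal and dual norm bounds are sharp. Writing $\eta := 1 - x^*(x)$, so $0 \le \eta < \eps$, I would first dispose of the degenerate case $x^* = 0$ (trivial) and, without loss of generality, rescale so that $\|x^*\| = 1$; since $\|x^*\| \ge x^*(x) > 1 - \eps$, this renormalization costs at most $\eps$ in the dual norm, small enough to be absorbed into the final estimate at the end.

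The core step is the Bishop--Phelps support-cone argument with a free parameter. For $k \in (0,1)$ to be optimized at the end, introduce the closed convex cone
\[
C_k := \{z \in X : k\|z\| \le x^*(z)\},
\]
and put a partial order on $B_X$ by $z_1 \preceq z_2 \iff z_2 - z_1 \in C_k$. The key observation is that $\preceq$-increasing nets in $B_X$ are $\|\cdot\|$-Cauchy, since every $\preceq$-increment $z_2 - z_1$ satisfies $\|z_2 - z_1\| \le \frac{1}{k}\,x^*(z_2 - z_1)$ and $x^*$ is bounded and monotone along the chain. Zorn's lemma therefore produces a $\preceq$-maximal $y \in B_X$ with $y \succeq x$. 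By maximality, the set $(y + C_k) \cap \interior(B_X)$ is empty, and Hahn--Banach separation of $y + C_k$ from $\interior(B_X)$ yields a functional $y^* \in X^*$ with $y^*(y) = \sup_{B_X} y^* = \|y^*\|$ and $y^* \ge 0$ on $C_k$; normalizing so that $\|y^*\| = 1$ gives $(y, y^*) \in \Pi(X)$.

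It remains to estimate the two distances. Primal side: since $y - x \in C_k$, we get immediately
\[
k\|y - x\| \le x^*(y - x) \le 1 - x^*(x) = \eta < \eps,
\]
so $\|y - x\| < \eps/k$. Dual side: the nonnegativity of $y^*$ on $C_k$ combined with $\|x^*\| = \|y^*\| = 1$ constrains $x^*$ to lie in the polar cone of $C_k$ shifted by a multiple of $y^*$; a direct computation in this polar gives $\|x^* - k y^*\| \le \sqrt{1 - k^2}$, from which $\|x^* - y^*\| \le (1-k) + \sqrt{1-k^2}$, or after sharpening, $\|x^* - y^*\| \le \sqrt{2(1-k)} + \sqrt{1-k^2}$. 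Balancing the two bounds $\eps/k$ and the dual estimate, then taking $k \to 1$ in the right asymptotic regime (concretely, $1 - k$ of order $\eps$), yields both $\|x - y\| < \sqrt{2\eps}$ and $\|x^* - y^*\| < \sqrt{2\eps}$.

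The main obstacle is the dual estimate with the sharp constant $\sqrt{2}$: the primal bound $\eps/k$ follows instantly from the cone inclusion, but controlling $\|x^* - y^*\|$ at the $\sqrt{2}$-optimal rate requires a careful analysis of the polar cone $C_k^\circ$ rather than a triangle-inequality argument, since crude bounds would only give a factor like $2\sqrt{\eps}$ (as in Bollob\'as's original version). The optimization of $k$ then converts the scale-invariant inequality between primal and dual deviations into the stated symmetric bound $\sqrt{2\eps}$.
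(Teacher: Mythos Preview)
Your primal estimate is right, but the dual estimate and the subsequent optimization are wrong. The claim that ``a direct computation in this polar gives $\|x^* - k y^*\| \le \sqrt{1-k^2}$'' is a Hilbert-space intuition that has no analogue in a general Banach space; nothing in the separation step controls $\|x^* - k y^*\|$ by $\sqrt{1-k^2}$. What the Phelps construction actually yields (this is Proposition~\ref{Corollary2.2Phelps} as quoted in the paper) is that the \emph{unnormalized} separating functional $\zeta^*$ satisfies $\|x^* - \zeta^*\| < k$, and after normalizing to $y^* = \zeta^*/\|\zeta^*\|$ one gets $\|x^* - y^*\| < 2k$ via the triangle inequality. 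So the two bounds to balance are $\|x-y\| < \eps/k$ and $\|x^*-y^*\| < 2k$, and the correct choice is $k = \sqrt{\eps/2}$, not $k \to 1$. With your regime $1-k \sim \eps$ the genuine dual bound $2k$ tends to $2$, not to something small; the argument collapses there.

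The paper itself does not give a self-contained proof of Proposition~\ref{Prop1.2} (it is quoted from \cite{Modul2014}), but its Lemma~\ref{LemmaBPB} is exactly the refinement you need: starting from Proposition~\ref{Corollary2.2Phelps}, it handles the case $x^* \in B_{X^*}$ (not just $S_{X^*}$) carefully and shows that for any $\tilde k \in [\eps/2,1)$ one can achieve $\|x-z\| < \eps/\tilde k$ and $\|x^*-z^*\| < 2\tilde k$ with $(z,z^*) \in \Pi(X)$. Taking $\tilde k = \sqrt{\eps/2}$ then gives both distances $< \sqrt{2\eps}$. Your initial ``renormalize $x^*$ to the sphere and absorb the cost later'' is too crude for this; the paper instead tracks $\|x^*\|$ explicitly through the argument and optimizes at the end, which is what lets the constant come out as $\sqrt{2}$ rather than something larger.
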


The sharpness of this version is demonstrated in \cite[Example~2.5]{Modul2014} by just considering $X=\ell_1^{(2)}$, the two-dimensional real $\ell_1$ space. For a uniformly non-square Banach space $X$ one has $\Phi_X(\eps)< \sqrt{2\eps}$ for all $\eps~\in~(0,2)$ (\cite[Theorem~5.9]{Modul2014}, \cite[Theorem~2.3]{Modul2016}).  A quantifcation of this inequality in terms of a parameter that measures the uniform non-squareness of $X$ was given in  \cite[Theorem 3.3]{Modul2015}.

Lindenstrauss in \cite{Lind} examined the extension of the Bishop--Phelps theorem on denseness of the family of norm-attaining scalar-valued functionals on
a Banach space, to vector-valued linear operators.
He introduced the property $\beta$, which is possessed by polyhedral finite-dimensional spaces, and by any subspace of  $\ell_\infty$ that contains $c_0$. 

\begin{definition} \label{def:beta} A Banach space $Y$ is said to have the property $\beta$ if there are two sets $\{y_\alpha: \alpha\in \Lambda\}\subset S_Y$, $\{y^*_\alpha: \alpha\in  \Lambda\}\subset S_Y^*$ and $0\leq \rho < 1$ such that the following conditions hold
\begin{enumerate}
\item $y^*_\alpha(y_\alpha) = 1$,
\item $|y^*_\alpha(y_\gamma)| \leq \rho$ if $\alpha \neq \gamma$,
\item $\norm{y} = \sup\{|y_\alpha^*(y)|: \alpha \in \Lambda\}$, for all $y\in Y$.
\end{enumerate}
\end{definition}
Denote for short by $\beta(Y)\leq\rho$ that a Banach space $Y$ has the property $\beta$ with parameter $\rho\in(0, 1)$. Obviously, if $\rho_1\leq\rho_2<1$ and $\beta(Y)\leq\rho_1$ , then $\beta(Y)\leq\rho_2$. If $Y$ has the property $\beta$ with parameter $\rho = 0$, we will write $\beta(Y)=0$.

Lindenstrauss proved that if a Banach space $Y$ has the property $\beta$, then for any Banach space $X$ the set of norm attaining operators is dense in $L(X,Y)$.  It was proved later by J. Partington \cite{Partin} that every Banach space can be equivalently renormed to have the property $\beta$.

In 2008, Acosta, Aron, Garc\'{i}a and Maestre in \cite{Acosta2008} introduced the following Bishop-Phelps-Bollob\'{a}s property as an extension of the Bishop-Phelps-Bollob\'{a}s theorem to the vector-valued case.

\begin{definition} \label{defBPBproperty} A couple of Banach spaces $(X,Y)$ is said to have \emph{the Bishop-Phelps-Bollob\'{a}s property for operators} if for any 
$\delta>0$
there exists a $\eps(\delta)>0$, such that for every operator $T\in S_{L(X,Y)}$, 
if $x\in S_X$ and
$\|T(x)\|>1-\eps(\delta)$, then there exist $z\in S_X$ and
$F\in S_{L(X,Y)}$ satisfying  $\|F(z)\|=1, \|x-z\|<\delta$  and $\|T-F\|<\delta$.
\end{definition}

In \cite[Theorem 2.2]{Acosta2008} it was proved that if $Y$ has the property $\beta$, then for any Banach space $X$ the pair $(X, Y)$ has the Bishop-Phelps-Bollob\'{a}s property for operators. In this article we introduce an analogue of the Bishop-Phelps-Bollob\'{a}s moduli for the vector-valued case.

\begin{definition}\label{def-bpb-mod-op}
Let $X, Y$ be Banach spaces. The \emph{Bishop-Phelps-Bollob\'{a}s modulus} (\emph{spherical Bishop-Phelps-Bollob\'{a}s modulus}) of a pair $(X, Y)$ is the function $\Phi(X,Y,\cdot):(0,1)\longrightarrow \R^+$ ($\Phi^S(X,Y,\cdot):(0,1)\longrightarrow \R^+$) whose value in point $\eps\in (0,1)$ is defined as the infimum of those $\delta>0$ such that for every $(x,T)\in B_X\times B_{L(X,Y)}$ ($(x,T)\in S_X\times S_{L(X,Y)}$ respectively) with $ \norm{T(x)}>1-\eps$, there is $(z,F)\in S_X\times S_{L(X,Y)}$ with $\|x-z\|<\delta$ and $\|T-F\|<\delta$.
\end{definition}

Under the notation
\begin{align*}
&\Pi_\eps(X,Y) \,= \left\{(x,T)\in X\times L(X,Y) : \|x\|\leq1, \|T\|\leq1, \, \|T(x)\| > 1-\varepsilon \right\},\\
&\Pi^S_\varepsilon(X,Y) = \left\{(x,T)\in X\times L(X,Y) : \|x\|=\|T\|=1, \, \|T(x)\| > 1-\varepsilon \right\}, \\
&\Pi(X,Y) \,\,\,= \left\{(x,T)\in X\times L(X,Y) : \|x\|=1, \|T\|=1, \, \|T(x)\|= 1 \right\},
\end{align*}
the definition can be rewritten as follows:
\begin{align*}
\Phi(X,Y,\varepsilon)=\underset{(x,T)\in\Pi_\varepsilon(X,Y)}{\sup}\quad\underset{(z,F)\in\Pi(X,Y)}{\inf} \quad \max\{\|x-z\|, \|T-F\|\},
\end{align*} 
\begin{align*}
\Phi^S(X,Y,\varepsilon)=\underset{(x,T)\in\Pi^S_\varepsilon(X,Y)}{\sup}\quad\underset{(z,F)\in\Pi(X,Y)}{\inf} \quad \max\{\|x-z\|, \|T-F\|\}.
\end{align*} 
Evidently, $\Phi^S(X,Y,\eps)\leq\Phi(X,Y,\eps)$, so any estimation from above for $\Phi(X,Y,\cdot)$ is also valid for $\Phi^S(X,Y,\cdot)$ and any estimation from below for $\Phi^S(X,Y,\cdot)$ is applicable to $\Phi(X,Y,\cdot)$. Also the following result is immediate.
\begin{remark} Let $X, Y$ be  Banach spaces, $\eps_1, \eps_2 > 0$ with $\eps_1<\eps_2$. Then
$\Pi_{\varepsilon_1}(X,Y)\subset \Pi_{\varepsilon_2}(X,Y)$ and $\Pi^S_{\varepsilon_1}(X,Y)\subset \Pi^S_{\varepsilon_2}(X,Y)$.
Therefore, $\Phi(X,Y,\varepsilon)$  and $\Phi^S(X,Y,\varepsilon)$ do not decrease as $\eps$ increases.
\end{remark}
Notice that a couple $(X,Y)$ has the Bishop-Phelps-Bollob\'{a}s property for operators if and only if $\Phi(X,Y,\varepsilon)\xrightarrow[\eps\rightarrow 0]{ }0$.

The aim of our paper is to estimate the Bishop-Phelps-Bollob\'{a}s modulus for operators which act to a Banach space with the property $\beta$. This paper is organized as follows. After the Introduction, in Section \ref{sec2} we will provide an estimation from above for $\Phi(X,Y,\varepsilon)$ for $Y$ possessing the property $\beta$ of Lindenstrauss (Theorem \ref{th:est_beta}) and an improvement for the case of $X$ being uniformly non-square (Theorem \ref{th:est_beta_non_square}). Section \ref{sec3} is devoted to estimations of $\Phi(X,Y,\varepsilon)$ from below and related problems. As a bi-product of these  estimations we obtain an interesting effect (Theorem \ref{theor-non-contin})  that $\Phi(X,Y,\varepsilon)$ is not continuous with respect to the variable $Y$. In  Section \ref{sec4} we consider a modification of  the above moduli which appear if one approximates by pairs $(y, F)$ with $\|F\|= \|Fy\|$ without requiring $\|F\| = 1$. Finally, in a very short Section \ref{sec5} we speak about a natural question which we did not succeed to solve.

%%%%%%%%%%%%%%%%
%%%%%%%%%%%%%%%%

\section{Estimation from above}\label{sec2}

Our first result is the upper bound of the Bishop-Phelps-Bollob\'{a}s moduli for the case when the range space has the property $\beta$ of Lindenstrauss.

\begin{theorem}\label{th:est_beta}
Let $X$ and $Y$ be Banach spaces such that $\beta(Y)\leq\rho$. Then for every $\varepsilon\in(0, 1)$
\begin{align}\label{eq_beta_main}
\Phi^S(X,Y,\varepsilon)\leq\Phi(X,Y,\varepsilon)\leq \min\left\{ \sqrt{2\varepsilon}\sqrt{\frac{1+\rho}{1-\rho}},\,\, 2\right\}.
\end{align}

\end{theorem}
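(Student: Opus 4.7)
The trivial part of the bound, $\Phi(X,Y,\varepsilon)\le 2$, is just the diameter estimate: for any $(z,F)\in\Pi(X,Y)$ and any $(x,T)\in B_X\times B_{L(X,Y)}$ one has $\|x-z\|\le 2$ and $\|T-F\|\le 2$.

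For the main inequality, fix an arbitrary $(x,T)\in B_X\times B_{L(X,Y)}$ with $\|T(x)\|>1-\varepsilon$. The plan is to reduce the problem to the scalar Bishop-Phelps-Bollob\'{a}s theorem (Proposition~\ref{Prop1.2}) by using property $\beta$. Condition~(iii) of Definition~\ref{def:beta} gives $\|T(x)\|=\sup_\alpha|y_\alpha^*(T(x))|$, so there is some $\alpha_0\in\Lambda$ with $|y_{\alpha_0}^*(T(x))|>1-\varepsilon$. A joint sign flip of $y_{\alpha_0}$ and $y_{\alpha_0}^*$ preserves the property-$\beta$ data, so we may assume the value is positive. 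Setting $x^*:=y_{\alpha_0}^*\circ T\in B_{X^*}$ gives $x^*(x)>1-\varepsilon$, and Proposition~\ref{Prop1.2} produces $(z,z^*)\in\Pi(X)$ with $\|x-z\|<\sqrt{2\varepsilon}$ and $\|z^*-x^*\|<\sqrt{2\varepsilon}$.

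The next step is to build $F\in S_{L(X,Y)}$ with $\|F(z)\|=1$ that is close to $T$. The natural candidate is to correct the $\alpha_0$-component of $T$:
$$F_0(u) := T(u)+\bigl(z^*(u)-y_{\alpha_0}^*(T(u))\bigr)y_{\alpha_0}.$$
Then $y_{\alpha_0}^*\circ F_0=z^*$, so $\|F_0(z)\|\ge|z^*(z)|=1$, and $\|F_0-T\|=\|z^*-x^*\|<\sqrt{2\varepsilon}$. For $\gamma\ne\alpha_0$ condition~(ii) of Definition~\ref{def:beta} gives $|y_\gamma^*(y_{\alpha_0})|\le\rho$, whence a direct estimate yields $\|F_0\|\le 1+\rho\|z^*-x^*\|$. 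Thus $F_0$ has the right qualitative behaviour but its norm is slightly above $1$ and not necessarily attained at $z$.

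The heart of the argument, and the main obstacle, is to pass from $F_0$ to a genuine element $F\in S_{L(X,Y)}$ with $\|F(z)\|=1$, absorbing the error into the multiplicative factor $\sqrt{(1+\rho)/(1-\rho)}$. The naive rescaling $F_0/\|F_0\|$ achieves unit norm but destroys attainment at $z$, while a correction restoring attainment potentially adds further error. The goal is to arrange the scaling step (costing a factor $\le 1+\rho\|z^*-x^*\|$) and the attainment correction (costing a factor $\ge 1-\rho\cdot(\text{small})$) so that they combine \emph{multiplicatively} through conditions~(ii) and~(iii) of property $\beta$, yielding the single factor $(1+\rho)/(1-\rho)$ inside the square root rather than an additive $1+\rho$. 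This is the delicate single-step calculation of the argument; a naive iteration of the $F_0$-construction fails to converge when $\rho>0$, so the two corrections must be bundled. Combined with the $\sqrt{2\varepsilon}$ from scalar BPB, this produces the estimate $\sqrt{2\varepsilon}\sqrt{(1+\rho)/(1-\rho)}$, and taking the minimum with the trivial bound~$2$ completes the proof.
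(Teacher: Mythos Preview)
Your final paragraph is not a proof: it names an obstacle (``the delicate single-step calculation'') and asserts that the two corrections ``combine multiplicatively'' to put $(1+\rho)/(1-\rho)$ inside the square root, but does not perform any such calculation. Two concrete ingredients are missing.

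First, your use of the \emph{symmetric} scalar estimate (Proposition~\ref{Prop1.2}) already loses the game. It gives $\|x-z\|<\sqrt{2\varepsilon}$ and $\|z^*-x^*\|<\sqrt{2\varepsilon}$. Any correction of $T$ along $y_{\alpha_0}$ built from $z^*$ will incur an operator error governed by $\|z^*-x^*\|$ multiplied by a constant of order $(1+\rho)/(1-\rho)$; with $\|z^*-x^*\|<\sqrt{2\varepsilon}$ this yields $\|T-F\|<\sqrt{2\varepsilon}\,\frac{1+\rho}{1-\rho}$, not $\sqrt{2\varepsilon}\sqrt{\frac{1+\rho}{1-\rho}}$. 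The paper instead uses the \emph{asymmetric} version (Lemma~\ref{LemmaBPB}): for a free parameter $k$ one gets $\|x-z\|<\varepsilon/k$ and $\|z^*-x^*\|<2k$. One then equalizes $\varepsilon/k$ and $2k\,\frac{1+\rho}{1-\rho}$ by choosing $k=\sqrt{\tfrac{\varepsilon}{2}\cdot\tfrac{1-\rho}{1+\rho}}$, and this optimization is precisely what puts the factor under the square root.

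Second, the specific mechanism that forces norm attainment is absent from your sketch. Your $F_0$ satisfies $\|F_0(z)\|\ge 1$ but $\|F_0\|$ may be strictly larger and not attained at $z$; rescaling alone does not fix this. The paper's device is to \emph{boost} the $\alpha_0$-component: define
\[
S(v)=T(v)+\bigl[(1+\eta)z^*(v)-(T^*y^*_{\alpha_0})(v)\bigr]y_{\alpha_0},\qquad \eta=\frac{2k\rho}{1-\rho}.
\]
The point of $\eta$ is that $\|S^*y^*_{\alpha_0}\|=1+\eta$ while $\|S^*y^*_{\alpha}\|<1+\eta$ for all $\alpha\ne\alpha_0$, so $\|S\|=\|S(z)\|=1+\eta$ exactly; then $F:=S/\|S\|$ lies in $\Pi(X,Y)$ with $z$, and one checks $\|T-F\|<2k+2\eta=2k\,\frac{1+\rho}{1-\rho}$. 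This explicit $(1+\eta)$-overshoot is the ``bundled correction'' you allude to; without writing it down the argument is incomplete.
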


The above result is a quantification of  \cite[Theorem 2.2]{Acosta2008} which states that if $Y$ has the property $\beta$, then for any Banach space $X$ the pair $(X, Y)$ has the Bishop-Phelps-Bollob\'{a}s property for operators. The construction is borrowed from the demonstration of \cite[Theorem 2.2]{Acosta2008}, but in order to obtain \eqref{eq_beta_main} we have to take care about details and need some additional work. At first, we have to modify a little bit the original results of Phelps about approximation of a functional $x^*$ and a vector $x$. 
 \begin{prop}[\cite{Phelps}, Corollary~2.2] \label{Corollary2.2Phelps} Let $X$ be a real Banach space, $x\in B_X$, $x^*\in S_{X^*}$, $\eta > 0$ and $x^*(x)> 1-\eta$. Then for any $k\in (0,1)$ there exist $\zeta^*\in X^*$ and $y\in S_X$ such that 
\begin{equation*} 
 \zeta^*(y) = \norm{\zeta^*},\qquad \norm{x-y} < \frac{\eta}{k}, \qquad \norm{x^*-\zeta^*} <  k.
\end{equation*}
\end{prop}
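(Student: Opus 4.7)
The plan is to run the classical Bishop--Phelps cone method while tracking the quantitative constants. Fix the given $k \in (0,1)$ and introduce the cone $K := \{w \in X : k\|w\| \leq x^*(w)\}$ together with the partial order $u \preceq v$ defined by $v - u \in K$; antisymmetry follows from $K \cap (-K) = \{0\}$. I apply Zorn's lemma to the nonempty set $\{z \in B_X : x \preceq z\}$. Any $\preceq$-chain $(z_\alpha)$ has monotone, bounded $x^*$-values, so the cone inequality $k\|z_\beta - z_\alpha\| \leq x^*(z_\beta - z_\alpha)$ forces the chain to be norm-Cauchy; its limit provides an upper bound in $B_X$. Let $y$ be a maximal element.

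From $x \preceq y$ one reads off
\begin{equation*}
k\|y - x\| \leq x^*(y - x) \leq 1 - x^*(x) < \eta,
\end{equation*}
so $\|y - x\| < \eta/k$. Maximality forces $\|y\| = 1$: otherwise, adding a small positive multiple of any $w \in K$ with $x^*(w) > 0$ to $y$ would yield a strictly $\preceq$-larger element still inside $B_X$. To produce $\zeta^*$, I exploit that $(y + K) \cap B_X = \{y\}$ by maximality; separating $y + K$ from $\interior B_X$ via Hahn--Banach yields a nonzero $\zeta^* \in X^*$ satisfying $\zeta^*(z) \leq \zeta^*(y)$ for every $z \in B_X$ (so $\zeta^*$ attains its norm at $y$) and $\zeta^*|_K \geq 0$.

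The crucial step is the dual-cone identification
\begin{equation*}
\{f \in X^* : f|_K \geq 0\} = \{\alpha x^* + \beta^* : \alpha \geq 0,\ \|\beta^*\| \leq \alpha k\},
\end{equation*}
obtained by applying Hahn--Banach to the sublinear functional $p(w) = k\|w\| - x^*(w)$, for which $K = \{p \leq 0\}$: the inclusion $\supseteq$ is a direct estimate, and $\subseteq$ comes from dominating $-f$ by $\lambda p$ for a suitable $\lambda \geq 0$ and rewriting the resulting bound $|(f - \lambda x^*)(w)| \leq \lambda k\|w\|$ as an operator-norm estimate. Writing $\zeta^* = \alpha x^* + \beta^*$ accordingly, one must have $\alpha > 0$ because $\zeta^* \neq 0$; rescaling (which preserves norm-attainment at $y$) yields $\|x^* - \zeta^*/\alpha\| \leq k$. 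Finally, strict inequalities are recovered by running the entire argument with slightly perturbed parameters $\eta' \in (1 - x^*(x), \eta)$ and $k'' \in [k\eta'/\eta,\, k)$, which forces $\|y - x\| < \eta'/k'' \leq \eta/k$ and $\|x^* - \zeta^*/\alpha\| \leq k'' < k$. The main obstacle is the dual-cone identification: this is the quantitative heart of the proof, tying the closeness estimate $\|x^* - \zeta^*\| < k$ directly to the aperture of the cone $K$.
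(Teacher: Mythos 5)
The paper does not prove this proposition itself but imports it verbatim from Phelps's cited paper (Corollary~2.2 there), whose argument is exactly the support-cone method you reconstruct: Zorn's lemma applied to the ordering induced by the cone $K=\{w: k\|w\|\leq x^*(w)\}$, separation of $y+K$ from the open unit ball to get a functional supporting $B_X$ at $y$ and nonnegative on $K$, and the dual-cone description $\{f: f|_K\geq 0\}=\{\alpha x^*+\beta^*: \alpha\geq 0,\ \|\beta^*\|\leq \alpha k\}$. Your reconstruction is correct, including the closing perturbation of $\eta$ and $k$ that upgrades the $\leq k$ coming from the dual-cone estimate to the strict inequality $\|x^*-\zeta^*\|<k$ in the statement.
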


For our purposes we need an improvement which allows to take $x^*\in B_{X^*}$. 

\begin{lemma} \label{LemmaBPB} Let $X$ be a real Banach space, $x\in B_X$, $x^*\in B_{X^*}$, $\eps > 0$ and $x^*(x)> 1-\eps$. Then for any $k\in (0,1)$ there exist $y^*\in X^*$ and $z\in S_X$ such that 
\begin{equation}\label{eqBPB1}
 y^*(z) = \norm{y^*},\qquad \norm{x-z} < \frac{1-\frac{1-\eps}{\norm{x^*}}}{k}, \qquad \norm{x^*-y^*} <  k \norm{x^*}.
\end{equation}\label{eqBPB2}
Moreover, for any $\tilde{k}\in [\eps/2, 1)$ there exist $z^*\in S_{X^*}$ and $z\in S_X$ such that 
\begin{equation} 
 z^*(z) = 1,\qquad \norm{x-z} < \frac{\eps}{\tilde{k}}, \qquad \norm{x^*-{z}^*} <  2\tilde{k}.
\end{equation}
\end{lemma}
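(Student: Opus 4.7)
The plan is to derive both claims from Proposition~\ref{Corollary2.2Phelps} by normalizing. Since $x^*(x) > 1-\eps$ forces $\|x^*\| > 1-\eps$, the quantity $\eta_0 := 1 - \frac{1-\eps}{\|x^*\|}$ is strictly positive, and $u^* := x^*/\|x^*\| \in S_{X^*}$ satisfies $u^*(x) > 1-\eta_0$. For the first conclusion, I apply Phelps to $(x,u^*)$ with this $\eta_0$ and the given $k$, obtaining $\zeta^* \in X^*$ and $z \in S_X$ with $\zeta^*(z)=\|\zeta^*\|$, $\|x-z\| < \eta_0/k$, and $\|u^*-\zeta^*\| < k$. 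Setting $y^* := \|x^*\|\,\zeta^*$ immediately yields $y^*(z) = \|y^*\|$, $\|x^*-y^*\| = \|x^*\|\cdot\|u^*-\zeta^*\| < k\|x^*\|$, and $\|x-z\| < \eta_0/k$, which is exactly the first displayed inequality.

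For the second conclusion the strategy is to invoke the first conclusion with a carefully tuned $k$ and then rescale, setting $z^* := y^*/\|y^*\|$ so that $z^*(z)=1$ automatically. The tuning I propose is
\begin{equation*}
k := \frac{\tilde{k}\,\eta_0}{\eps},
\end{equation*}
which lies in $(0,1)$: positivity is clear, and since $\|x^*\| \leq 1$ forces $\eta_0 \leq \eps$, we get $k \leq \tilde{k} < 1$. With this $k$ the first distance estimate is immediate: $\|x-z\| < \eta_0/k = \eps/\tilde{k}$.

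The main calculation is the bound on $\|x^*-z^*\|$. Combining the triangle inequality with the estimate $\bigl|\|y^*\|-1\bigr| \leq \bigl|\|y^*\|-\|x^*\|\bigr| + (1-\|x^*\|) < k\|x^*\| + (1-\|x^*\|)$ gives
\begin{equation*}
\|x^*-z^*\| \leq \|x^*-y^*\| + \bigl|\|y^*\|-1\bigr| < 2k\|x^*\| + (1-\|x^*\|).
\end{equation*}
Substituting the chosen $k$ and writing $s := \|x^*\|$, the desired inequality $2ks + (1-s) \leq 2\tilde{k}$ simplifies, after clearing denominators, to $(s-1)(2\tilde{k}-\eps) \leq 0$. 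This holds because $s \leq 1$ and, by the hypothesis $\tilde{k} \geq \eps/2$, $2\tilde{k}-\eps \geq 0$. Recognizing this final sign condition is the only subtlety in the argument; it is precisely what dictates the lower bound $\tilde{k} \geq \eps/2$ in the statement and it is what makes the two bounds $\eps/\tilde{k}$ and $2\tilde{k}$ simultaneously attainable by a single choice of $k$.
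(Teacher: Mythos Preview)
Your proof is correct and follows essentially the same approach as the paper: normalize $x^*$, apply Proposition~\ref{Corollary2.2Phelps}, rescale to get $y^*$, and for the ``moreover'' part choose the same parameter $k = \tilde{k}\eta_0/\eps$ (the paper writes it as $\tilde{k}(\|x^*\|-(1-\eps))/(\eps\|x^*\|)$, which is identical), then normalize $y^*$ to obtain $z^*$. The only cosmetic difference is in the final step: the paper phrases the bound $2k\|x^*\|+(1-\|x^*\|)\leq 2\tilde{k}$ as the monotonicity of $f(t)=\tfrac{2\tilde{k}(t-(1-\eps))}{\eps}+1-t$ on $(1-\eps,1]$, while you reduce it directly to the sign condition $(s-1)(2\tilde{k}-\eps)\leq 0$; these are the same computation.
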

\begin{proof}
We have that $\frac{x^*}{\norm{x^*}}(x)> 1-\eta$ for $\eta=1-\frac{1-\eps}{\norm{x^*}}$ and we can apply Proposition \ref{Corollary2.2Phelps}. So, for any $k\in (0,1)$ there exist $\zeta^*\in X^*$ and $z\in S_X$ such that 
\begin{equation*} 
 \zeta^*(z) = \norm{\zeta^*},\qquad \norm{x-z} < \frac{\eta}{k}, \qquad \norm{\frac{x^*}{\norm{x^*}}-\zeta^*} <  k.
\end{equation*}
In order to get \eqref{eqBPB1} it remains to introduce $y^*=\norm{x^*}\cdot\zeta^*$. This functional also attains its norm at $z$ and 
\begin{equation*} 
\norm{x^*-{y^*}}= \norm{x^*}\cdot\norm{\frac{x^*}{\norm{x^*}}-\zeta^*}<  k \norm{x^*}.
\end{equation*}
In order to demonstrate  the ``moreover'' part,  take
$$ 
k=\frac{\tilde{k}(\norm{x^*}-(1-\eps))}{\eps \norm{x^*}}.
$$
The inequality $\norm{x^*} \ge x^*(x) > 1-\eps$ implies that $k > 0$. On the other hand, $k=\tilde{k}\left(\frac{1}{\eps} - \frac{(1-\eps)}{\eps  \norm{x^*}}\right)\le \tilde{k}\left(\frac{1}{\eps} - \frac{(1-\eps)}{\eps}\right) = \tilde{k} < 1$, so for this $k$ we can find  $y^*\in X^*$ and $z\in S_X$ such that \eqref{eqBPB1} holds true. Denote $z^*=\frac{y^*}{\norm{y^*}}$.
Then $\norm{x-z} < \eps/\tilde{k}$ and
\begin{align*}
\norm{x^*-z^*} &\leq \norm{x^*-y^*} +\norm{y^*-z^*}\leq \norm{x^*-y^*} + |1-\norm{y^*}| \\ &\leq \norm{x^*-y^*}+|1-\norm{x^*}+\norm{x^*}-\norm{y^*}|\leq 2 \norm{x^*-y^*}+1 - \norm{x^*}.
\end{align*}
So, we have
\begin{align*}
\norm{x^*-z^*}<2k\norm{x^*}+1 - \norm{x^*}=\frac{2\tilde{k}\cdot (\norm{x^*}-(1-\eps))}{\eps}+1 - \norm{x^*}\leq 2\tilde{k}.
\end{align*}
The last inequality holds, since the function $f(t)=\frac{2\tilde{k}\cdot (t-(1-\eps))}{\eps}+1 - t$ with $t\in (1-\eps, 1)$, is increasing when $\tilde{k}\geq \eps/2$, so $\max f = f(1)=2\tilde{k}$.
\end{proof}

\begin{remark}\label{rem_attain}
One can easily see that for $\tilde k < \frac{\eps}{2}$ the ``moreover'' part with \eqref{eqBPB2} is trivially true (and is not sharp) because in this case the inequality $\norm{x-{z}} \le \frac{\eps}{\tilde k}$ is weaker than the triangle inequality  $\norm{x-{z}} \le 2$, so one can just use the density of the set of norm-attaining functionals in order to get the desired  $(z,z^*)\in \Pi(X)$ with $\norm{x^*-{z}^*} <  2\tilde{k}$. 
\end{remark}

\begin{proof}[Proof of Theorem \ref{th:est_beta}]
We will use the notations $\{y_\alpha: \alpha\in \Lambda\}\subset S_Y$ and $\{y^*_\alpha: \alpha\in  \Lambda\}\subset S_Y^*$ from Definition \ref{def:beta} of  the property $\beta$.  

Consider $T\in B_{L(X,Y)}$ and $x\in B_X$ such that $\|Tx\|> 1-\eps$. 
According to (iii) of Definition \ref{def:beta}, there is $\alpha_0\in \Lambda$ such that $|y^*_{\alpha_0}(Tx)|> 1-\eps$. By Lemma \ref{LemmaBPB}, for
any  $k \in [\frac{\eps}{2}, 1)$ and for any $\delta>0$ there exist $z^*\in S_{X^*}$ and $z\in S_X$ such that $|z^*(z)|=1$, $\|z-x\|< \eps/k$ and $\|z^*-T^*(y^*_{\alpha_0})\|< 2k$.

For $\eta = 2k\frac{\rho}{1-\rho}$ let us introduce the following operator $S\in L(X,Y)$
\begin{align}\label{form_oper}
S(v) = T(v)+[(1+\eta)z^*(v)-(T^*y^*_{\alpha_0})(v)]y_{\alpha_0}.
\end{align}

Remark, that for all $y^*\in Y^*$
$$
S^*(y^*) = T^*(y^*)+[(1+\eta)z^* -T^*y^*_{\alpha_0}]y^*(y_{\alpha_0}).
$$
According to (iii) of Definition \ref{def:beta} the set $\{y^*_{\alpha}: \alpha \in \Lambda\}$ is norming for $Y$, consequently $\norm{S} = \sup_\alpha \norm{S^*{y^*_{\alpha}}}$.  Let us calculate the norm of $S$. 
$$\norm{S} \geq \norm{S^*(y^*_{\alpha_0})} = (1+\eta)\norm{z^*} = 1+\eta.$$
On the other hand for $\alpha \neq \alpha_0$ we obtain
$$
\norm{S^*(y^*_{\alpha})}\leq 1+\rho (\norm{z^* - T^*(y^*_{\alpha_0})} + \eta\norm{z^*})< 1+\rho(2k+\eta) = 1+\eta.
$$
Therefore, 
$$
\norm{S} = \norm{S^*(y^*_{\alpha_0})} = (1+\eta)\norm{z^*}=|y^*_{\alpha_0}(S(z))|\leq \norm{S(z)}\leq\norm{S}.
$$
So, we have $\|S\|=\|S(z)\|=1+\eta$. Also,
 $\|S-T\| \le \eta + \|z^*-T^*(y^*_{\alpha_0})\| < \eta + 2k$.

Define $F:=\frac{S}{\|S\|}$. Then $\|F\|=\|F(z)\|=1$ and $\|S - F\|=\|S\|(1-\frac{1}{1+\eta})=\eta$. So, $\|T-F\|< 2k+2\eta$. 

Therefore, we have that
\begin{align*}
\|z-x\|< \eps/k 	\mbox{ and } \|T-F\|< 2k \frac{1+\rho}{1-\rho}.
\end{align*}

Let us substitute $k=\sqrt{\frac{\varepsilon}{2}\cdot \frac{1-\rho}{1+\rho}}\,$  $\left( \text{here we need\,} \eps\leq\frac{2(1-\rho)}{1+\rho} \text{\,to have\,} k\in[\eps/2,1) \right)$. Then we obtain 
\begin{align*}
\max\{\|z-x\|, \|T-F\|\}< \sqrt{2\varepsilon}\sqrt{\frac{1+\rho}{1-\rho}}.
\end{align*}
Finally, if $\eps>\frac{2(1-\rho)}{1+\rho}$, we can use the triangle inequality to get the evident estimate $\max\{\|z-x\|, \|T-F\|\} \le 2$.
\end{proof}

%\subsection{Improvement through the parameter of uniform non-squareness}

Our next goal is to give an improvement for a uniformly non-square domain space $X$. We recall that uniformly non-square spaces were introduced by James \cite{James} as those spaces whose two-dimensional subspaces are uniformly separated (in the sense of Banach-Mazur distance) from $\ell_1^{(2)}$. A Banach space $X$ is uniformly non-square if and only if there is $\alpha>0$ such that
$$
\frac{1}{2}(\|x+y\|+\|x-y\|)\leq 2-\alpha
$$
for all $x, y \in B_X$. The \emph{parameter of uniform non-squareness} of $X$, which we denote $\alpha(X)$, is the best possible value of $\alpha$ in the above inequality. In other words,
$$
\alpha(X) := 2-\underset{x,y\in B_X}{\sup}\left\lbrace\frac{1}{2}(\|x+y\|+\|x-y\|)\right\rbrace.
$$

In \cite[Theorem~3.3]{Modul2015} it was proved that for a uniformly non-square space $X$ with the parameter of uniform non-squareness $\alpha(X)>\alpha_0 > 0$ 
\begin{equation*}
\Phi^S_X(\eps) \leq \sqrt{2\eps}\,\sqrt{1-\frac{1}{3}\alpha_0} \quad \text{for}\quad \eps \in \left(0, \frac{1}{2} - \frac{1}{6}\alpha_0\right).
\end{equation*}
To obtain this fact the authors proved the following technical result.
\begin{lemma}\label{main-lemma}
Let $X$ be a Banach space with $\alpha(X)> \alpha_0$. Then for every $x\in S_X, y\in X$ and  every $ k \in (0, \frac12]\,$ if  $\,\|x-y\|\leq k\,$  then
$$
\left\|x-\frac{y}{\|y\|}\right\|\leq 2k\left(1- \frac{1}{3}\alpha_0\right).
$$
\end{lemma}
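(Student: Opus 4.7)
The plan is to reduce the quantity $\|x - y/\|y\|\|$ to a form accessible to the uniform non-squareness inequality, and then combine non-squareness with convexity of the norm. I may assume $\|x-y\|=k$ (replacing $k$ by $\|x-y\|$ only strengthens the conclusion) and $w:=y-x\neq 0$, so $\|w\|=k$. The basic identity
\[
x - \frac{y}{\|y\|} \;=\; \frac{1}{\|y\|}\bigl((\|y\|-1)x - w\bigr)
\]
reduces the task to bounding $\bigl\|(\|y\|-1)x - w\bigr\|$ from above, and I will split into two cases according to whether $\|y\|$ exceeds $1$.

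In the case $\|y\|\geq 1$, set $c=\|y\|-1\in[0,k]$. The first step is to apply the uniform non-squareness inequality to the pair of unit vectors $x$ and $w/k$, giving $\|x+w/k\| + \|x-w/k\| \leq 4-2\alpha_0$. Convexity of the one-variable map $\alpha\mapsto\|x+\alpha w\|$, together with its values $1$ at $\alpha=0$ and $1+c$ at $\alpha=1$, then produces $\|x+w/k\|\geq 1+c/k$, so non-squareness yields $\|kx-w\|\leq 3k-2k\alpha_0-c$. Writing $cx-w=(c/k)(kx-w)+(1-c/k)(-w)$ upgrades this to $\|cx-w\|\leq 2c+k-2c\alpha_0-c^2/k$, and therefore
\[
\Bigl\|x-\tfrac{y}{\|y\|}\Bigr\| \;\leq\; \frac{2c+k-2c\alpha_0-c^2/k}{1+c}.
\]
The last step will be a one-variable verification in $t=c/k\in[0,1]$ that the right-hand side is at most $2k(1-\alpha_0/3)$; it reduces to showing that the quadratic $-t^2+2t\bigl(1-\alpha_0-k(1-\alpha_0/3)\bigr)$ attains on $[0,1]$ a value at most $1-2\alpha_0/3$, which is routine.

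In the case $\|y\|<1$ one sets $c'=1-\|y\|\in(0,k]$ and rearranges to $\|x-y/\|y\|\|=\|c'x+w\|/(1-c')$; the symmetric argument (now using $\|x-w\|\geq 2-\|x+w\|=1+c'$ as the input to convexity) produces $\|c'x+w\|\leq 2c'+k-2c'\alpha_0-c'^2/k$. I expect the \emph{main obstacle} to lie in this second case: dividing by $1-c'$ rather than $1+c$, the resulting estimate is not automatically $\leq 2k(1-\alpha_0/3)$ in the regime where $c'$ is close to $k$, i.e., when $w$ is nearly antiparallel to $x$. In that regime the actual value of $\|x-y/\|y\|\|$ is in fact very small (zero in strictly convex spaces), so closing the gap will require sharpening the non-squareness step, either through a second application on an auxiliary pair such as $(x,\, x+w)$, or through a subdivision of Case~2 according to the size of $c'/k$ so that the trivial bound $\|x-y/\|y\|\|\leq \|x-y\|+|1-\|y\||$ can be combined with the non-square estimate to recover the factor $1-\alpha_0/3$.
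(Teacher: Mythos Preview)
First, note that the paper does not actually prove this lemma: it is quoted as a known technical result from \cite{Modul2015}, so there is no in-paper argument to compare your attempt against.

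On its own merits, your Case~1 ($\|y\|\geq 1$) is sound. The convexity of $\alpha\mapsto\|x+\alpha w\|$ does give $\|x+w/k\|\geq 1+c/k$, non-squareness then yields $\|kx-w\|\leq 3k-2k\alpha_0-c$, the convex-combination step is correct, and the final quadratic check in $t=c/k$ is indeed routine once one uses $\alpha_0\leq 1$ (so that $(1-\alpha_0)^2\leq 1-\tfrac{2}{3}\alpha_0$).

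Case~2, however, is a genuine gap, and your proposed repairs do not close it. Concretely, take $c'=k=\tfrac12$ and $\alpha_0$ small: your bound
\[
\frac{2c'+k-2c'\alpha_0-c'^2/k}{1-c'} \;=\; 2(1-\alpha_0),
\]
while the target $2k(1-\alpha_0/3)$ equals $1-\alpha_0/3$; the discrepancy is of order $1$, not a matter of constants. Neither suggested patch helps in this regime. The ``trivial bound'' $\|x-y/\|y\|\|\leq k+c'$ is exactly $2k$ at $c'=k$ and carries no $\alpha_0$-gain whatsoever. And applying non-squareness to $\bigl(x,\,(x+w)/\|x+w\|\bigr)$ yields
\[
\frac{\|c'x+w\|}{1-c'} \;\leq\; 4-2\alpha_0-\frac{\|(2-c')x+w\|}{1-c'}\;\leq\; 4-2\alpha_0-\frac{2-c'-k}{1-c'},
\]
which at $c'=k=\tfrac12$ is again $2-2\alpha_0$. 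The structural difficulty is that when $c'$ is close to $k$ the constraints $\|x\|=1$, $\|w\|=k$, $\|x+w\|=1-c'$ force $w$ to be nearly $-kx$ (exactly so in any strictly convex space), so $\|c'x+w\|$ is in fact small; but a single application of the defining inequality to $x$ and $w/k$ cannot see this, and a genuinely different use of uniform non-squareness is required to finish.
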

\begin{theorem}\label{th:est_beta_non_square}
Let $X$ and $Y$ be Banach spaces such that $\beta(Y)\leq\rho$, $X$ is uniformly non-square with $\alpha(X)> \alpha_0$, and $\eps_0=\min\left\{\frac{2}{(1-1/3\alpha_0)}\frac{1-\rho}{1+\rho}, \frac{1}{2}\frac{1+\rho}{1-\rho}(1- 1/3\alpha_0)\right\}$. Then  for any $0<\varepsilon<\eps_0 $ 
\begin{align}\label{eq_beta_main_non_square}
\Phi^S(X,Y,\varepsilon)\leq  \sqrt{2\varepsilon\left(1- \frac{1}{3}\alpha_0\right)}\sqrt{\frac{1+\rho}{1-\rho}}\,\,.
\end{align}

\end{theorem}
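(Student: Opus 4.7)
The plan is to repeat the proof of Theorem \ref{th:est_beta} almost verbatim, but to replace the ``moreover'' clause of Lemma \ref{LemmaBPB} by a sharpened version that incorporates Lemma \ref{main-lemma}. This will cut the bound on the functional approximation error by a factor $(1-\tfrac13\alpha_0)$, and this improvement propagates identically into the bound on $\|T-F\|$, yielding the stated estimate after re-optimization of the parameter $k$.

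First, given $(x,T)\in S_X\times S_{L(X,Y)}$ with $\|T(x)\|>1-\varepsilon$, I would pick $\alpha_0\in\Lambda$ (replacing the pair $(y^*_{\alpha_0},y_{\alpha_0})$ by $(-y^*_{\alpha_0},-y_{\alpha_0})$ if necessary) so that $x^*:=T^*(y^*_{\alpha_0})\in B_{X^*}$ satisfies $x^*(x)>1-\varepsilon$, exactly as in the proof of Theorem \ref{th:est_beta}. The new ingredient is a sharpened form of Lemma \ref{LemmaBPB}: whenever $X$ is uniformly non-square with $\alpha(X)>\alpha_0$, then for every admissible $\tilde{k}$ (in particular $\tilde k\le 1/2$) there exist $z\in S_X$ and $z^*\in S_{X^*}$ with $z^*(z)=1$, $\|x-z\|<\varepsilon/\tilde{k}$, and
$$\|x^*-z^*\|<2\tilde{k}\bigl(1-\tfrac13\alpha_0\bigr).$$
The proof mimics that of Lemma \ref{LemmaBPB}, but the final normalization $y^*\mapsto y^*/\|y^*\|$ in $X^*$ is treated by Lemma \ref{main-lemma} (applied in $X^*$, which inherits uniform non-squareness from $X$ since both are super-reflexive) instead of a raw triangle inequality.

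With this sharper approximation in hand, I set $\eta:=2\tilde{k}(1-\tfrac13\alpha_0)\rho/(1-\rho)$ and define the operator $S$ by formula \eqref{form_oper}. The same bookkeeping as in the proof of Theorem \ref{th:est_beta} gives $\|S\|=\|S(z)\|=1+\eta$, so $F:=S/\|S\|\in\Pi(X,Y)$ and
$$\|T-F\|\le 2\eta+\|x^*-z^*\|<2\tilde{k}\bigl(1-\tfrac13\alpha_0\bigr)\frac{1+\rho}{1-\rho}.$$
Equating the two error bounds by taking $\tilde{k}=\sqrt{\varepsilon(1-\rho)/(2(1-\tfrac13\alpha_0)(1+\rho))}$ then gives \eqref{eq_beta_main_non_square}. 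The two components of $\varepsilon_0$ in the hypothesis are exactly the conditions needed to keep this optimal $\tilde k$ in the range $[\varepsilon/2,\,1/2]$: the first component secures $\tilde k\ge\varepsilon/2$ (for the Phelps step, just as in Theorem \ref{th:est_beta}), while the second secures $\tilde k\le 1/2$ (for the applicability of Lemma \ref{main-lemma}).

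The main obstacle is the sharpened version of Lemma \ref{LemmaBPB}. In its original proof the factor $2$ in $\|x^*-z^*\|<2\tilde k$ comes from a triangle-inequality chain involving $|1-\|y^*\||$; one must verify that applying Lemma \ref{main-lemma} to the pair $x^*/\|x^*\|,\,\zeta^*$ in $X^*$ indeed shaves the leading factor to $2(1-\tfrac13\alpha_0)$ while still absorbing the residual $1-\|x^*\|$ into the same bound. This is the step where the specific threshold $\varepsilon_0$, and in particular its second component, enters, and where the careful matching between the parameters $k$ and $\tilde k$ from Lemma \ref{LemmaBPB} must be redone to preserve the $(1-\tfrac13\alpha_0)$ gain.
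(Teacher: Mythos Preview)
Your approach is essentially identical to the paper's: the authors separate out exactly the ``sharpened Lemma~\ref{LemmaBPB}'' you describe as an auxiliary lemma (Lemma~\ref{main_non-sq_lemma}), prove it by applying Lemma~\ref{main-lemma} to the pair $x^*/\|x^*\|,\zeta^*$ in $X^*$, and then copy the construction of $S$ and $F$ from Theorem~\ref{th:est_beta} with $\eta=2k(1-\tfrac13\alpha_0)\rho/(1-\rho)$ and the same optimal $k$ you chose.

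Two small wrinkles to watch. First, the admissible range in the sharpened lemma is $k\in\bigl[\varepsilon/(2(1-\tfrac13\alpha_0)),\,\tfrac12\bigr]$, not $[\varepsilon/2,\tfrac12]$: the lower endpoint shifts because the monotonicity argument for the auxiliary function $f(t)=\frac{2k(1-1/3\alpha_0)(t-(1-\varepsilon))}{\varepsilon}+1-t$ now needs $2k(1-\tfrac13\alpha_0)\ge\varepsilon$. This is what the first component of $\varepsilon_0$ is meant to control. Second, your justification ``$X^*$ inherits uniform non-squareness since both are super-reflexive'' does not give $\alpha(X^*)>\alpha_0$; you need the self-duality $\alpha(X^*)=\alpha(X)$ of the parameter of uniform non-squareness to apply Lemma~\ref{main-lemma} in $X^*$ with the same $\alpha_0$. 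The paper uses this implicitly without comment.
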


Before proving the theorem, we need a preliminary result.

\begin{lemma}\label{main_non-sq_lemma}
Let $X$ be a Banach space with $\alpha(X)> \alpha_0$. Then for every $0<\eps<1$ and for every $(x,x^*)\in S_X\times B_{X^*}$ with $x^*(x)> 1-\eps$, and for every $ k \in [\frac{\eps}{2(1-1/3\alpha_0)}, \frac12]$ there is $(y,y^*)\in \Pi(X)$ such that
\begin{align*}
\|x-y\|<\frac{\eps}{k}\quad \emph{and} \quad
\|x^*-y^*\|< 2k\left(1- \frac{1}{3}\alpha_0\right).
\end{align*}
\end{lemma}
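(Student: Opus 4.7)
The plan is to reduce to Phelps's Proposition~\ref{Corollary2.2Phelps} after normalizing $x^*$, and then to sharpen the resulting dual estimate by invoking Lemma~\ref{main-lemma} \emph{in the dual space} $X^*$. Since $x^*(x)>1-\eps$ and $\|x\|=1$, one has $\|x^*\|>1-\eps>0$; put $\tilde x^*:=x^*/\|x^*\|\in S_{X^*}$ and $\eta:=1-(1-\eps)/\|x^*\|\in(0,\eps]$, so that $\tilde x^*(x)>1-\eta$.

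I would apply Proposition~\ref{Corollary2.2Phelps} to $(x,\tilde x^*)\in S_X\times S_{X^*}$ with the parameter
$$
k':=\frac{k\eta}{\eps}=\frac{k(\|x^*\|-1+\eps)}{\eps\|x^*\|}\le k\le\tfrac{1}{2}.
$$
This produces $\zeta^*\in X^*$ and $y\in S_X$ with $\zeta^*(y)=\|\zeta^*\|$, $\|x-y\|<\eta/k'=\eps/k$, and $\|\tilde x^*-\zeta^*\|<k'$. Putting $y^*:=\zeta^*/\|\zeta^*\|\in S_{X^*}$ gives $(y,y^*)\in\Pi(X)$ and already delivers the required bound on $\|x-y\|$.

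For the bound on $\|x^*-y^*\|$, the crucial step is to apply Lemma~\ref{main-lemma} in $X^*$ to the pair $(\tilde x^*,\zeta^*)\in S_{X^*}\times X^*$, obtaining $\|\tilde x^*-y^*\|<2k'(1-\tfrac{1}{3}\alpha_0)$. This is legitimate because uniform non-squareness is self-dual with the same constant, $\alpha(X^*)=\alpha(X)$; indeed, writing $\|x^*\pm y^*\|$ as suprema over $B_X$ and swapping the order of suprema yields
$$
\sup_{x^*,y^*\in B_{X^*}}\bigl[\|x^*+y^*\|+\|x^*-y^*\|\bigr]=\sup_{x,y\in B_X}\bigl[\|x+y\|+\|x-y\|\bigr].
$$
The triangle inequality now gives
$$
\|x^*-y^*\|\le\|x^*\|\,\|\tilde x^*-y^*\|+(1-\|x^*\|)<2k'\|x^*\|\bigl(1-\tfrac{1}{3}\alpha_0\bigr)+(1-\|x^*\|),
$$
and the substitution $2k'\|x^*\|=2k(\|x^*\|-1+\eps)/\eps$ rewrites the right-hand side as
$$
2k\bigl(1-\tfrac{1}{3}\alpha_0\bigr)+(1-\|x^*\|)\Bigl[1-\tfrac{2k(1-\tfrac{1}{3}\alpha_0)}{\eps}\Bigr],
$$
whose bracketed factor is $\le 0$ precisely under the hypothesis $k\ge\eps/(2(1-\tfrac{1}{3}\alpha_0))$, which delivers the required strict inequality.

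The only non-routine ingredient is the self-duality $\alpha(X^*)=\alpha(X)$ that permits Lemma~\ref{main-lemma} to be used on the dual side; everything else is a careful calibration of the Phelps parameter $k'$ so that the two bounds, on $\|x-y\|$ and on $\|x^*-y^*\|$, are simultaneously saturated at the left endpoint of the admissible range of $k$.
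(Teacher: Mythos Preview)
Your proof is correct and follows essentially the same route as the paper's own argument: normalize $x^*$, apply Phelps's proposition with the calibrated parameter $k'=\dfrac{k(\|x^*\|-1+\eps)}{\eps\|x^*\|}$, then invoke Lemma~\ref{main-lemma} in $X^*$ to pass from $\zeta^*$ to its normalization, and finish with the same affine-in-$\|x^*\|$ estimate that is maximized at $\|x^*\|=1$. In fact you are a bit more explicit than the paper, which applies Lemma~\ref{main-lemma} to dual elements without comment; your observation that $\alpha(X^*)=\alpha(X)$ (via the sup-swapping identity) is exactly what is needed to make that step legitimate.
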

\begin{proof}
The  reasoning is almost  the same as in Lemma \ref{LemmaBPB}. 
We have that $\frac{x^*}{\norm{x^*}}(x)> 1-\eta$ for $\eta=1-\frac{1-\eps}{\norm{x^*}}$ and we can apply Proposition \ref{Corollary2.2Phelps} for every $\tilde{k}\in (0,1/2]$. Let us take 
$$\tilde{k}=\frac{ k (\norm{x^*} - (1-\eps))}{\eps \norm{x^*}}.$$
The inequality $\norm{x^*} \ge x^*(x) > 1-\eps$ implies that $\tilde{k} > 0$. On the other hand, $\tilde{k}=k\left(\frac{1}{\eps} - \frac{(1-\eps)}{\eps  \norm{x^*}}\right)\le k\left(\frac{1}{\eps} - \frac{(1-\eps)}{\eps}\right) = k < 1/2$, so for this $\tilde{k}$ we can find  $\zeta^*\in X^*$ and $z\in S_X$  such that 
\begin{equation*} 
 \zeta^*(z) = \norm{\zeta^*},\qquad \norm{x-z} < \frac{\eta}{\tilde{k}}, \qquad \norm{\frac{x^*}{\norm{x^*}}-\zeta^*} <  \tilde{k}.
\end{equation*}

Consider $z^*=\frac{\zeta^*}{\norm{\zeta^*}}$. According to Lemma \ref{main-lemma} 
\begin{align*}
\norm{\frac{x^*}{\norm{x^*}}-z^*} <  2\tilde{k}\left(1- \frac{1}{3}\alpha_0\right).
\end{align*}
Then $\norm{x-z} < \eps/k$ and
\begin{align*}
\|x^*-z^*\|&=\norm{x^*}\cdot\left\|\frac{x^*}{\norm{x^*}}-\frac{z^*}{\norm{x^*}}\right\| \leq \norm{x^*}\left(\left\|\frac{x^*}{\norm{x^*}}-z^*\right\|+\left\|z^* - \frac{z^*}{\norm{x^*}}  \right\|\right)\\ 
&=\norm{x^*}\left( 2\tilde{k}(1- 1/3\alpha_0)+\left|1-\frac{1}{\norm{x^*}}\right|\right) \\
&=2\frac{k(\norm{x^*}-(1-\eps))}{\eps}(1- 1/3\alpha_0)+1-\norm{x^*} \leq 2k(1- 1/3\alpha_0).
\end{align*}
The last inequality holds, because if we consider the function $$f(t)=\frac{2k(1- 1/3\alpha_0)\cdot (t-(1-\eps))}{\eps}+1 - t$$ with $t\in (1-\eps, 1]$, then ${f'}\geq 0$ if $k\geq \frac{\eps}{2(1- 1/3\alpha_0)}$, so $\max f = f(1)=2k\left(1- \frac{1}{3}\alpha_0\right)$.
\end{proof}

\begin{proof}[Proof of Theorem \ref{th:est_beta_non_square}]
The proof is a minor modification of the one given for Theorem \ref{th:est_beta}. 

In order to get \eqref{eq_beta_main_non_square} for $\eps<\eps_0$ we
consider $T\in S_{L(X,Y)}$ and $x\in S_X$ such that $\|T(x)\|> 1-\eps$. 
Since $Y$ has the property $\beta$, there is $\alpha_0\in \Lambda$ such that $|y^*_{\alpha_0}(T(x))|> 1-\eps$. By Lemma \ref{main_non-sq_lemma}, for
any  $k \in [\frac{\eps}{2(1-1/3\alpha_0)}, \frac12]$ and for any $\eps>0$ there exist $z^*\in S_{X^*}$ and $z\in S_X$ such that $|z^*(z)|=1$, $\|z-x\|< \eps/k$ and $\|z^*-T^*(y^*_{\alpha_0})\|< 2k(1- 1/3\alpha_0)$.

For $\eta = 2k(1- 1/3\alpha_0)\frac{\rho}{1-\rho}$ we define $S\in L(X,Y)$ by the formula \eqref{form_oper} and take $F:=\frac{S}{\norm{S}}$.
By the same argumentation as before, we have that
\begin{align*}
\|x-z\|< \eps/k 	\mbox{ and } \|T-F\|< 2k\left(1- \frac{1}{3}\alpha_0\right) \frac{1+\rho}{1-\rho}.
\end{align*}
Let us substitute $k=\sqrt{\frac{\varepsilon}{2(1- 1/3\alpha_0)}\cdot \frac{1-\rho}{1+\rho}}$ (here we need $\eps<\eps_0$). Then we obtain that 
\begin{align*}
\max\{\|z-x\|, \|T-F\|\}< \sqrt{2\varepsilon\left(1- \frac{1}{3}\alpha_0\right)}\sqrt{\frac{1+\rho}{1-\rho}}.
\end{align*}
\end{proof}

%%%%%%%%%%%%%%%%
%%%%%%%%%%%%%%%%

\section{Estimation from below}\label{sec3}
\subsection{Improvement for $\Phi(\ell^{(2)}_1,Y,\varepsilon)$}
We tried our best, but unfortunately we could not find an example demonstrating the sharpness of \eqref{eq_beta_main} in Theorem \ref{th:est_beta}. So, our goal is less ambitious. We are going to present examples of pairs $(X, Y)$  in which the estimation of $\Phi(X,Y, \varepsilon)$ from below is reasonably close to the estimation from above given in \eqref{eq_beta_main}.

Theorem \ref{th:est_beta_non_square} shows that in order to check the sharpness of Theorem \ref{th:est_beta} one has to  try those domain spaces $X$ that are not uniformly non-square. The simplest of them is  $X=\ell^{(2)}_1$. In \cite[Example 2.5]{Modul2014} this space worked perfectly for the Bishop-Phelps-Bollob\'{a}s modulus for functionals. Nevertheless, this is not so when one deals with the Bishop-Phelps-Bollob\'{a}s modulus for operators. Namely, the following theorem demonstrates that for $X=\ell^{(2)}_1$ the estimation given in Theorem \ref{th:est_beta} can be improved.

\begin{theorem}\label{th:ell_1}
Let $Y$ be Banach spaces and  $\beta(Y)\leq\rho$. Then 
\begin{align}\label{eq:ell_1}
\Phi^S(\ell^{(2)}_1,Y,\varepsilon)\leq\Phi(\ell^{(2)}_1,Y,\varepsilon)\leq\min\left\{\sqrt{2\varepsilon}\frac{1+\rho}{\sqrt{1-\rho^2+\frac{\eps}{2}\rho^2}+\rho\sqrt{\frac{\eps}{2}}}, 1\right\}.
\end{align}
\end{theorem}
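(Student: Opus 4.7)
The proof follows the strategy of Theorem~\ref{th:est_beta}, refining the estimates by exploiting the polyhedral structure of $\ell_1^{(2)}$. As in Section~\ref{sec2}, given $(x,T)\in B_X\times B_{L(X,Y)}$ with $\|Tx\|>1-\eps$, one invokes condition~(iii) of Definition~\ref{def:beta} to pick $\alpha_0\in\Lambda$ with $|y^*_{\alpha_0}(Tx)|>1-\eps$ and sets $x^*=\pm T^*(y^*_{\alpha_0})\in B_{X^*}$, so that $x^*(x)>1-\eps$. Writing $x=(x_1,x_2)$ and $x^*=(a_1,a_2)$ in the canonical basis, reflections and a permutation of coordinates let us assume $a_1\geq|a_2|$ and $x_1,x_2\geq 0$, and in particular $a_1>1-\eps$.

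The key step is to choose $(z,z^*)\in\Pi(\ell_1^{(2)})$ that directly controls the quantity $\|(1+\eta)z^*-x^*\|_\infty$ appearing in \eqref{form_oper}, bypassing the factor $2$ that appears in Lemma~\ref{LemmaBPB}. Taking $z=e_1$ and $z^*=(1,a_2/(1+\eta))\in S_{X^*}$ (legitimate since $|a_2/(1+\eta)|\leq 1$) one gets
\[
\|(1+\eta)z^*-x^*\|_\infty \,=\, (1+\eta)-a_1\,.
\]
Property~$\beta$ then demands $\rho[(1+\eta)-a_1]\leq\eta$, so with $\eta=\rho(1-a_1)/(1-\rho)$ and $F=S/\|S\|$ one obtains $\|T-F\|\leq(1-a_1)(1+\rho)/(1-\rho)$, an estimate \emph{linear} in $1-a_1\leq\eps$ rather than the $O(\sqrt{\eps})$ of Theorem~\ref{th:est_beta}. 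The companion bound on $\|z-x\|=(1-x_1)+x_2$ follows from the constraint $a_1x_1+a_2x_2>1-\eps$ combined with $x_1+x_2\leq 1$, giving the LP-type estimate $\|z-x\|\leq 2(\eps-(1-a_1))/(a_1-a_2)$ in the regime $a_1>a_2$. In the degenerate regime $a_1\approx a_2$ one switches to the complementary choice $z^*=(1,1)$, $z=x/\|x\|$, producing an analogous pair of estimates with the roles of $\|z-x\|$ and $\|T-F\|$ interchanged (now $\|T-F\|\leq (1-a_2)(1+\rho)/(1-\rho)$ and $\|z-x\|\leq (\eps-(1-a_1))/(1-(1-a_1))$).

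Setting $\sigma:=1-a_1\in[0,\eps]$ and $\delta:=a_1-a_2\geq 0$ as free parameters, the quantity $\max\{\|z-x\|,\|T-F\|\}$ over all admissible configurations becomes an explicit expression in $(\sigma,\delta,\rho,\eps)$. Taking the minimum over the two strategies and the supremum over $(\sigma,\delta)$ subject to the constraints reduces to a two-variable Lagrangian problem whose critical relation is of the form $\sigma\cdot(\eps-\sigma)\propto\rho^2\eps/2$; solving it produces exactly the denominator $\sqrt{1-\rho^2+\eps\rho^2/2}+\rho\sqrt{\eps/2}$ of \eqref{eq:ell_1} after a standard quadratic calculation. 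The cut-off $\min\{\cdot,1\}$ handles the range $\eps\geq (1-\rho)/2$ where the main expression already exceeds~$1$.

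The main difficulty is combining the two regimes ($z=e_1$ versus $z^*=(1,1)$) into a single uniform estimate and verifying that the Lagrangian optimisation reproduces exactly the claimed denominator rather than a slight perturbation of it; the appearance of $\rho^2\eps/2$ under the square root is precisely the signature of the critical balance $\sigma^{*}\propto\rho\sqrt{\eps/2}$, which is not visible from the generic construction of Theorem~\ref{th:est_beta} and is what makes the $\ell_1^{(2)}$ case genuinely strictly better than the universal bound \eqref{eq_beta_main}.
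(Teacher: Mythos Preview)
Your approach has a genuine gap: the rank-one perturbation \eqref{form_oper} is too rigid to produce the bound $A(\rho,\eps)$ of \eqref{eq:ell_1}; it only recovers the universal estimate $\sqrt{2\eps\,(1+\rho)/(1-\rho)}$ of Theorem~\ref{th:est_beta}. To see this, take the adversarial configuration $\sigma=1-a_1=0$, $\|x\|=1$, and let $\delta=a_1-a_2=\tau$ and $x_2\uparrow \eps/\tau$. Your Strategy~1 then gives $\|z-x\|=2x_2\to 2\eps/\tau$ while $\|T-F\|=0$; your Strategy~2 gives $\|z-x\|=0$ while $\|T-F\|\le\tau(1+\rho)/(1-\rho)$. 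Balancing, the adversary picks $\tau=\sqrt{2\eps(1-\rho)/(1+\rho)}$, and \emph{both} of your strategies return $\sqrt{2\eps(1+\rho)/(1-\rho)}$, which is strictly larger than $A(\rho,\eps)$ for every $\rho>0$. So the claimed Lagrangian optimisation cannot yield the denominator $\sqrt{1-\rho^2+\eps\rho^2/2}+\rho\sqrt{\eps/2}$; the extra term $\rho\sqrt{\eps/2}$ simply does not appear in your bounds.

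What the paper does instead is abandon \eqref{form_oper} entirely in the second regime and work directly in $Y$. The key tool is Lemma~\ref{lemma_grani}: if $y\in B_Y$ satisfies $y^*_{\alpha_0}(y)\ge 1-r$, then one can push $y$ onto the face $\{y^*_{\alpha_0}=1\}\cap S_Y$ at cost at most $\dfrac{r(1+\rho)}{1-\rho+\rho r}$, not merely $\dfrac{r(1+\rho)}{1-\rho}$. Applying this separately to $T(e_1)$ and $T(e_2)$ (with possibly different scalings, which \eqref{form_oper} cannot do) and setting $F(e_i)=z_i$ gives, in Case~II,
\[
\|T-F\|\le g(t,\delta)=\frac{(1+\rho)(\eps-1+t)}{(1-\rho)t\delta+\rho(\eps-1+t)}\,.
\]
The extra $+\rho r$ in the denominator of Lemma~\ref{lemma_grani} is precisely what, after balancing $f(t,\delta)=2t\delta+1-t$ against $g(t,\delta)$ at $t=1$, produces the quadratic $(1-\rho)u^2+2\rho\eps u-2(1+\rho)\eps=0$ whose positive root is $A(\rho,\eps)$. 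Without that refinement you are stuck at the general bound.
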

To prove this theorem we need a preliminary result.

\begin{lemma}\label{lemma_grani}
Let $Y$ be Banach space such that $\beta(Y)\leq\rho$, $y\in B_Y$, $\{y_\alpha: \alpha\in \Lambda\}\subset S_Y$, $\{y^*_\alpha: \alpha\in  \Lambda\}\subset S_Y^*$  be the sets from Definition \ref{def:beta}.  For given $r \in (0,1)$, $\alpha_0\in  \Lambda$ suppose that  $y^*_{\alpha_0}(y)\geq 1- r$. Then there is $z\in S_Y$ such that
\begin{enumerate}
\item $y^*_{\alpha_0}(z)=1$;

\item $|y^*_{\alpha}(z)| \leq 1$ for all $\alpha\in  \Lambda$;

\item $\norm{y-z}\leq\frac{r(1+\rho)}{1-\rho+\rho r}$.
\end{enumerate}
\end{lemma}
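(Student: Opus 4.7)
The natural approach is to modify $y$ by a suitable scalar multiple of $y_{\alpha_0}$ so as to force $y^*_{\alpha_0}$ to take the value $1$, and then rescale to control the norm. Set $s := 1 - y^*_{\alpha_0}(y)$, so $0 \leq s \leq r$; if $s = 0$ we simply take $z = y$ (the conditions $y^*_{\alpha_0}(y) = 1$ and $\|y\| \leq 1$ force $\|y\| = 1$). Otherwise the plan is to try the ansatz
\[
z := \frac{y + t\, y_{\alpha_0}}{1 + t\rho}
\]
and choose $t > 0$ so that (i) holds. The equation $y^*_{\alpha_0}(z) = 1$ becomes the linear equation $y^*_{\alpha_0}(y) + t = 1 + t\rho$, whose solution is $t = s/(1-\rho)$.

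With this choice (i) holds by construction. For (ii), when $\alpha \neq \alpha_0$ one uses $|y^*_\alpha(y_{\alpha_0})| \leq \rho$ together with $|y^*_\alpha(y)| \leq \|y\| \leq 1$ to estimate
\[
|y^*_\alpha(z)| \leq \frac{|y^*_\alpha(y)| + t\,|y^*_\alpha(y_{\alpha_0})|}{1 + t\rho} \leq \frac{1 + t\rho}{1 + t\rho} = 1,
\]
and for $\alpha = \alpha_0$ the value is exactly $1$. Combined with condition (iii) of Definition \ref{def:beta} this shows $\|z\| = \sup_\alpha |y^*_\alpha(z)| = 1$, so in particular $z \in S_Y$.

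For (iii), a direct computation gives
\[
y - z = \frac{t(\rho\, y - y_{\alpha_0})}{1 + t\rho},
\]
whence $\|y - z\| \leq \frac{t(1+\rho)}{1 + t\rho}$ by the triangle inequality. Substituting $t = s/(1-\rho)$ and simplifying yields $\|y - z\| \leq \frac{s(1+\rho)}{1 - \rho + s\rho}$. The remaining step is to check that the function $\varphi(s) = \frac{s(1+\rho)}{1-\rho+s\rho}$ is non-decreasing on $[0,r]$ (an easy derivative computation gives $\varphi'(s) = \frac{1-\rho^2}{(1-\rho+s\rho)^2} > 0$), which then permits replacing $s$ by its upper bound $r$ to yield the claimed estimate.

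The only mildly delicate step is spotting the right ansatz: the normalization by $1 + t\rho$ is precisely what is needed to simultaneously pin $y^*_{\alpha_0}(z)$ at $1$ while keeping each $|y^*_\alpha(z)| \leq 1$. Once this is fixed, everything else reduces to bookkeeping and the monotonicity remark above.
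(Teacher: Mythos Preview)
Your proof is correct and essentially identical to the paper's: your $z=\dfrac{y+t\,y_{\alpha_0}}{1+t\rho}$ with $t=s/(1-\rho)$ is exactly the paper's $z=\dfrac{r_0}{1-\rho+\rho r_0}y_{\alpha_0}+\dfrac{1-\rho}{1-\rho+\rho r_0}\,y$ (with $s=r_0$), and the verifications of (i)--(iii) match, the only cosmetic difference being that you bound $\|y-z\|$ directly by the triangle inequality on the explicit expression whereas the paper evaluates $|y^*_\alpha(y-z)|$ and takes the supremum.
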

\begin{proof}
Suppose that $y^*_{\alpha_0}(y)=1-r_0, r_0\in[0, r]$. According to (i) of   Definition \ref{def:beta} $y^*_{\alpha_0}(y_{\alpha_0})=1$.
Let us check  the properties (i)-(iii) for
\begin{align*}
z:=\frac{r_0}{1-\rho+\rho r_0}y_{\alpha_0}+\left(1-\frac{r_0\rho}{1-\rho+\rho r_0}\right)y.
\end{align*}

\noindent (i) $y^*_{\alpha_0}(z) = \frac{r_0}{1-\rho+\rho r_0}+\left(1-\frac{r_0\rho}{1-\rho+\rho r_0}\right)(1-r_0)=1$;

\noindent (ii) For every $\alpha\neq\alpha_0$ we have $|y^*_{\alpha}(z)|\leq\frac{r_0}{1-\rho+\rho r_0}\cdot \rho+\left(1-\frac{r_0\rho}{1-\rho+\rho r_0}\right)=1$;

\noindent (iii) As $\{y^*_\alpha: \alpha\in  \Lambda\}\subset S_Y^*$ is a 1-norming subset, so $\norm{y-z}=\underset{\alpha\in  \Lambda}{\sup} |y^*_\alpha(y-z)|$. Notice that $|y^*_{\alpha_0}(y-z)| \leq r$, and for every $\alpha\neq\alpha_0$ we have 
\begin{align*}
|y^*_{\alpha}(y-z)|=\left|\frac{r_0}{1-\rho+\rho r_0}y^*_{\alpha}(y)-\frac{r_0}{1-\rho+\rho r_0}y^*_{\alpha}(y_{\alpha_0})\right|\leq\frac{r_0(1+\rho)}{1-\rho+\rho r_0}\leq\frac{r(1+\rho)}{1-\rho+\rho r}.
\end{align*}
So, $\norm{y-z}\leq \max\left\{r, \frac{r(1+\rho)}{1-\rho+\rho r}\right\}=\frac{r(1+\rho)}{1-\rho+\rho r}$.

\noindent Finally, (i) and (ii) imply that  $z\in S_Y$.
\end{proof}

\begin{remark}\label{rem_attain}
For every operator $T \in L(\ell_1^{(2)}, Y)$
$$\norm{T}=\max\{\norm{T(e_1)}, \norm{T(e_1)}\}.$$
Moreover, if the operator $T\in L(\ell_1^{(2)}, Y)$ attains its norm in some point $x\in S_{\ell_1^{(2)}}$
which does not coincide neither with $\pm e_1$, nor with $\pm e_2$, then either the segment $[T(e_1), T(e_2)]$, or $[T(e_1), - T(e_2)]$ has to lie on the sphere $\|T\|S_Y$. 
\end{remark}

\begin{proof}[Proof of Theorem \ref{th:ell_1}]
Let us denote $A(\rho, \eps):=\sqrt{2\varepsilon}\frac{1+\rho}{\sqrt{1-\rho^2+\frac{\eps}{2}\rho^2}+\rho\sqrt{\frac{\eps}{2}}}$. Notice that $A(\rho, \eps)$ is increasing as a function of $\rho$, in particular  $\sqrt{2\varepsilon}=A(0, \eps) \leq A(\rho, \eps) \leq A(1, \eps) = 2$.
 
%\textcolor{blue}{* with details}
% 
%$(A(\rho, \eps))'_\rho=\frac{\sqrt{2\varepsilon}}{(\sqrt{1-\rho^2+\frac{\eps}{2}\rho^2}+\rho\sqrt{\frac{\eps}{2}})^2}\cdot \left(\sqrt{1-\rho^2+\frac{\eps}{2}\rho^2}+\rho \sqrt{\frac{\eps}{2}}-(1+\rho)(\frac{-2\rho+\eps\rho}{2    \sqrt{1-\rho^2+\frac{\eps}{2}\rho^2} }+\sqrt{\frac{\eps}{2}}) \right)$
% 
%Let us check that $\left(\sqrt{1-\rho^2+\frac{\eps}{2}\rho^2}+\rho \sqrt{\frac{\eps}{2}}-(1+\rho)(\frac{-2\rho+\eps\rho}{2    \sqrt{1-\rho^2+\frac{\eps}{2}\rho^2} }+\sqrt{\frac{\eps}{2}}) \right)\geq 0$.
% 
%$\Leftrightarrow 2(1-\rho^2+\frac{\eps}{2}\rho^2) +2\rho\sqrt{\frac{\eps}{2}}\sqrt{1-\rho^2+\frac{\eps}{2}\rho^2} -(1+\rho)(-2\rho+\eps\rho)-2(1+\rho)\sqrt{\frac{\eps}{2}}\sqrt{1-\rho^2+\frac{\eps}{2}\rho^2}\geq 0$
% 
%$\Leftrightarrow 2-2\rho^2 + \eps\rho^2+2\rho-\eps\rho+2\rho^2-\eps\rho^2 -2\sqrt{\frac{\eps}{2}}\sqrt{1-\rho^2+\frac{\eps}{2}\rho^2}\geq 0$
% 
%$\Leftrightarrow 2+2\rho-\eps\rho-2\sqrt{\frac{\eps}{2}}\sqrt{1-\rho^2+\frac{\eps}{2}\rho^2}\geq 0 $
% 
%$\quad 2+2\rho-\eps\rho-2\sqrt{\frac{\eps}{2}}\sqrt{1-\rho^2+\frac{\eps}{2}\rho^2}\geq 2-2\sqrt{\frac{\eps}{2}}\sqrt{1-\rho^2+\frac{\eps}{2}\rho^2}\geq 2-2\geq0$
% 
%\textcolor{blue}{**********************}
 
We are going to demonstrate that for every pair $(x, T)\in \Pi_\eps(\ell^{(2)}_1, Y)$ 
there exists a pair $(y, F)\in\Pi(\ell^{(2)}_1, Y)$ with
\begin{align*}
\max\{\norm{x-y}, \norm{T-F}\}\leq \min\{A(\rho, \eps), 1\}.
\end{align*}
Without loss of generality suppose that  $x=(t(1-\delta), t\delta), \delta\in [0, 1/2], t\in [1-\eps, 1]$. Evidently, $\|x\|=t$.  First, we make sure that $\Phi(\ell^{(2)}_1,Y,\varepsilon)\leq 1.$ Indeed, we can always approximate $(x, T)$ by the pair $y:=e_1$ and $F$ determined by formula $F(e_i):=T(e_i)/\norm{T(e_i)}$. Then $\norm{x-e_1}=2t\delta+1-t\leq 1$ and $\norm{T-F}\leq 1$.

It remains to show that $\Phi(\ell^{(2)}_1,Y,\varepsilon)\leq A(\rho, \eps),$ when $A(\rho, \eps)< 1$. As  $A(\rho, \eps)\geq\sqrt{2\eps}$ we must consider $\eps\in(0, 1/2)$.  Since $Y$ has the property $\beta$, we can select an $\alpha_0$ such that $|y^*_{\alpha_0}(T(x))|> 1-\eps$. Without loss of generality we can assume $y^*_{\alpha_0}(T(x))> 1-\eps$. Then  $y^*_{\alpha_0}\left(T\left(\frac{x}{t}\right)\right)> 1-\eps'$, where $\eps'=\frac{\eps-(1-t)}{t}\in(0, \eps)$. 
Therefore, 
\begin{align}\label{eq_gran}
y^*_{\alpha_0}(T(e_1))> 1-\frac{\eps'}{1-\delta} \quad 
\text{and}\quad y^*_{\alpha_0}(T(e_2)) > 1-\frac{\eps'}{\delta}.
\end{align}
 We are searching for an approximation of $(x, T)$ by a pair $(y, F)\in \Pi(\ell_1^{(2)}, Y)$. Let us consider two cases:

Case I:  \, $2t\delta+1-t \le A(\rho, \eps)$. In this case we approximate $(x, T)$ by the vector $y:=e_1$ and the operator $F$ such that $F(e_1):=\frac{T(e_1)}{\norm{T(e_1)}}, F(e_2):=T(e_2)$. Then
\begin{align*}
\norm{x-y}\leq 2t\delta+1-t \le A(\rho, \eps)\,\, \text{and}\,\, \norm{T-F}\leq 1- \norm{T(e_1)}\leq \frac{\eps}{1-\delta}\leq 2\eps\leq A(\rho, \eps).
\end{align*}

Case II:  \, $2t\delta+1-t > A(\rho, \eps)$. Remark, that in this case 
$2t\delta+1-t >  \sqrt{2\eps}$, and consequently (here we use that  $A(\rho, \eps)\geq\sqrt{2\eps}$,  $\eps\in(0, 1/2)$ and $t \in (0, 1]$),
$$
\delta > \frac{\sqrt{2\eps} - (1 - t)}{2t} \ge \eps'.
$$
 According to \eqref{eq_gran} we can apply Lemma \ref{lemma_grani} for the points $T(e_1)$ and $T(e_2)$ with $r=\frac{\eps'}{\delta} < 1$. 
So, there are $z_1, z_2\in S_Y$ such that $y^*_{\alpha_0}(z_1)=y^*_{\alpha_0}(z_2)=1$ and 
$$ \max\{\norm{T(e_1)-z_1}, \norm{T(e_2)-z_2}\}\leq \frac{\frac{\eps'}{\delta}(1+\rho)}{1-\rho +\rho \frac{\eps'}{\delta}}.$$
Denote $y:=x/t \in S_{\ell_1^{(2)}}$ and define $F$ as follows:
$$F(e_1):=z_1, \quad F(e_2):=z_2.$$
Then $\|F\| = 1$, $\|F(y)\|  \ge y^*_{\alpha_0}(Fy) = 1$, so $F$ attains its norm in  $y$ and 
\begin{align*}
\norm{T-F}\leq \frac{\frac{\eps'}{\delta}(1+\rho)}{1-\rho +\rho \frac{\eps'}{\delta}}.
\end{align*}
So, in this case
\begin{align*}
\norm{x-y}\leq \eps\leq A(\rho, \eps)\,\, \text{and}\,\, \norm{T-F}\leq \frac{(1+\rho)\frac{\eps-1+t}{t\delta}}{1-\rho+\rho\frac{\eps-1+t}{t\delta}}.
\end{align*}

To prove our statement we must show that if  $2t\delta+1-t > A(\rho, \eps)$, then  $\frac{(1+\rho)\frac{\eps-1+t}{t\delta}}{1-\rho+\rho\frac{\eps-1+t}{t\delta}} \le A(\rho, \eps)$.  Let us denote $f(t, \delta)=2t\delta+1-t$ and $g(t, \delta)=\frac{(1+\rho)\frac{\eps-1+t}{t\delta}}{1-\rho+\rho\frac{\eps-1+t}{t\delta}}=\frac{(1+\rho)(\eps-1+t)}{(1-\rho)t\delta+\rho(\eps-1+t)}$. So, we need to demonstrate that 
\begin{align}\label{minfg}
\min\{f(t, \delta), g(t, \delta)\}\leq A(\rho, \eps)\, \text{for all}\, \delta\in[0, 1/2]\, \text{ and for all}\, t\in [1-\eps, 1].
\end{align}

Notice that for every fixed $t\in [1-\eps, 1]$  the function $f(t, \delta)$ is increasing as $\delta$ increases  and $g(t, \delta)$ is decreasing as $\delta$ increases. 
So, if we find $\delta_0$ such that $f(t, \delta)=g(t, \delta)$, then $\min\{f(t, \delta), g(t, \delta)\}\leq f(t, \delta_0)$.
If we denote  $u= f(t, \delta)= 2t\delta+1-t$ the equation  $f(t, \delta)=g(t, \delta)$  transforms to
\begin{align}\label{eqfg2}
u=2-\frac{2(1-\rho)(u-\eps)}{(t-1+\eps)(1+\rho)+(u-\eps)(1-\rho)}.
\end{align}
The right-hand side of this equation is increasing as $t$ increases, so the positive solution of the equation \eqref{eqfg2} $u_t$  is also increasing. This means that we obtain the greatest possible solution, if we substitute $t=1$. Then we get the equation
\begin{align*}
u^2\frac{1+\rho}{2}+u \rho\eps-\eps(1+\rho)=0.
\end{align*}
From here $u=A(\rho, \eps)$, and so, the inequality \eqref{minfg} holds.

%\textcolor{blue}{* with details}
%
%
% We have the equation
% $$2t\delta+1-t=\frac{(1+\rho)(\eps-1+t)}{(1-\rho)t\delta+\rho(\eps-1+t)}.$$
%We denote $ u=2t\delta+1-t \Rightarrow t\delta=\frac{1}{2}(u-1+t)$\\
%$\frac{1}{2}u=\frac{(1+\rho)(\eps-1+t)}{(1-\rho)(u+t-1)+2\rho(t-1+\eps)}$\\
%$\frac{1}{2}u=1-\frac{(1-\rho)(u-\eps)}{(1-\rho)(u+t-1)+2\rho(t-1+\eps)}$\\
%$\frac{1}{2}u=1-\frac{(1-\rho)(u-\eps)}{(1-\rho)(u-\eps)+(1+\rho)(t-1+\eps)}.$\\
%After we substitute $t=1$:\\
%$\frac{1}{2}u=\frac{\eps(1+\rho)}{2\rho\eps+u(1+\rho)}$\\
%$u^2\frac{1+\rho}{2}+u \rho\eps-\eps(1+\rho)=0$\\
%$D=\rho^2\eps^2+2\eps(1-\rho^2)$\\
%$u=\frac{\sqrt{\rho^2\eps^2}+2\eps(1-\rho^2)-\rho\eps}{1+\rho}=\sqrt{2\varepsilon}\frac{1+\rho}{\sqrt{1-\rho^2+\frac{\eps}{2}\rho^2}+\rho\sqrt{\frac{\eps}{2}}}.$
%
%\textcolor{blue}{**********************}
\end{proof}

\subsection{Estimation from below for $\Phi^S(\ell^{(2)}_1,Y,\eps)$} \label{ssectestbelow} 
So, if $X=\ell_1^{(2)}$, the estimation for  the Bishop-Phelps-Bollob\'{a}s modulus is somehow better than in Theorem \ref{th:est_beta}. Nevertheless, considering $\ell_1^{(2)}$ we can obtain some interesting estimations from below for $\Phi^S(\ell^{(2)}_1,Y,\eps)$. Notice that the estimations \eqref{eq_beta_main} and \eqref{eq:ell_1} give the same asymptotic behavior  when $\eps$ is convergent to $0$. Our next proposition gives the estimation for $\Phi^S(\ell^{(2)}_1,Y,\eps)$ from below, when $\beta(Y)=0$.

\begin{theorem}\label{th:est_beta2}
For every Banach space $Y$ 
\begin{align*}
\Phi^S(\ell^{(2)}_1,Y,\varepsilon)\geq\min\{\sqrt{2\varepsilon}, 1\}.
\end{align*}  
In particular, $\Phi^S(\ell^{(2)}_1,Y,\varepsilon) = \min\{\sqrt{2\varepsilon}, 1\}$ if  $\beta(Y)=0$.
\end{theorem}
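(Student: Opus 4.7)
The plan is to lift the classical $\ell^{(2)}_1$-obstruction of \cite[Example~2.5]{Modul2014} into $Y$ via a rank-one operator, and then show that the lifted example retains the same lower estimate regardless of the geometry of $Y$. Fix any $y_0 \in S_Y$, a small $\eta > 0$, and set $\mu := \min\{\sqrt{2\varepsilon}, 1\} - \eta$, $s := \mu/2$, $r := \mu$. Define $x := (1-s, s) \in S_{\ell^{(2)}_1}$ and $T \in L(\ell^{(2)}_1, Y)$ by $T(e_1) := y_0$, $T(e_2) := (1-r)y_0$. Then $\|T\| = 1$ and $T(x) = (1-sr)y_0$ has norm $1 - \mu^2/2 > 1 - \varepsilon$, so $(x, T) \in \Pi^S_\varepsilon(\ell^{(2)}_1, Y)$.

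The remaining task is to verify that every $(y, F) \in \Pi(\ell^{(2)}_1, Y)$ satisfies $\delta := \max\{\|x-y\|, \|T - F\|\} \geq \mu$; this will be a case analysis on the position of $y$, exploiting the elementary identity $\|T - F\| = \max\{\|T(e_1) - F(e_1)\|, \|T(e_2) - F(e_2)\|\}$ valid for operators out of $\ell^{(2)}_1$. Writing $p := F(e_1)$, $q := F(e_2)$, this identity gives $\|p\| \geq 1 - \delta$ and $\|q\| \leq 1 - r + \delta$. One may assume $\delta < 1$ (otherwise $\delta \geq 1 \geq \mu$); since $s < 1/2$, the distance constraint rules out $y_1 < 0$ (which forces $\|x - y\| \geq 2(1-s) > 1$), while a short computation shows that $y_2 < 0$ gives $\|x - y\| \geq 2s = \mu$, so the bound $\delta \geq \mu$ is already delivered.

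The essential subcases are thus $y = e_1$ and $y = (y_1, y_2)$ with $y_1, y_2 > 0$, $y_1 + y_2 = 1$. For $y = e_1$, one has $\|F(y)\| = \|p\| = 1$ and $\|x - y\| = 2s = \mu$, so $\delta \geq \mu$. For $y$ with strictly positive coordinates, the chain
\begin{equation*}
1 = \|F(y)\| \leq y_1 \|p\| + y_2 \|q\| \leq y_1 + y_2(1 - r + \delta) = 1 - y_2(r - \delta)
\end{equation*}
together with $y_2 > 0$ forces $r \leq \delta$, that is, $\delta \geq \mu$. Taking the infimum over $(y, F) \in \Pi(\ell^{(2)}_1, Y)$ gives $\Phi^S(\ell^{(2)}_1, Y, \varepsilon) \geq \mu$, and letting $\eta \downarrow 0$ yields the claimed lower bound. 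The ``in particular'' statement follows by combining this with Theorem~\ref{th:ell_1} specialized to $\rho = 0$, which produces the matching upper estimate $\Phi(\ell^{(2)}_1, Y, \varepsilon) \leq \min\{\sqrt{2\varepsilon}, 1\}$.

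I expect the main technical nuisance to be the verification that a negative second coordinate of $y$ still forces $\|x - y\| \geq 2s$ (a brief case split on whether $y_2 \geq -s$), since the clean inequality chain for the interior case $y_1, y_2 > 0$ and the triviality of $y = e_1$ are straightforward once the column-wise formula for $\|T - F\|$ has been invoked.
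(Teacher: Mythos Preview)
Your argument is correct and essentially identical to the paper's: the same rank-one operator $T(z_1,z_2)=(z_1+(1-r)z_2)\,y_0$ and the same point $x=(1-s,s)$ are used, followed by the same dichotomy (an extreme $y$ forces $\|x-y\|\geq\mu$, while a non-extreme $y$ forces $\|F(e_2)\|$ close to $1$ and hence $\|T-F\|\geq\mu$). The only trivial omission in your case split is $y=e_2$, which your convexity chain already covers once $y_1>0$ is relaxed to $y_1\geq 0$.
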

\begin{proof}

To prove our statement we must show that $\Phi^S(\ell^{(2)}_1,Y,\varepsilon)\geq \sqrt{2\varepsilon}$ for $\eps\in (0,1/2)$. The remaining inequality $\Phi^S(\ell^{(2)}_1,Y,\varepsilon)\geq 1$ for  $\eps > 1/2$ will follow from the monotonicity of $\Phi^S(\ell^{(2)}_1,Y,\cdot)$. So, for every $\eps\in (0,1/2)$ 
and for every $\delta > 0$ we are looking for a pair $(x, T) \in \Pi^S_\eps(\ell^{(2)}_1, Y)$ such that 
\begin{equation*}
\max\left\{ \norm{x-y}, \norm{T-F}\right\}\geq \sqrt{2\varepsilon} - \delta.
\end{equation*}
for every pair $(y, F) \in \Pi(\ell^{(2)}_1, Y)$.  Fix $\xi\in S_Y$  and $\eps_0 < \eps$ such that $\sqrt{2\eps_0} > \sqrt{2\varepsilon} - \delta$. Consider the following operator $T  \in S_{L(\ell_1^{(2)},Y)}$:
$$
T(z_1, z_2)=(z_1+(1-\sqrt{2\eps_0})z_2)\xi
$$ 
and take $x=(1-\sqrt{\eps_0/2},\sqrt{\eps_0/2})\in S_{\ell_1^{(2)}}$. Then $ \norm{T(x)} = 1-\eps_0 > 1 - \eps$.  
To approximate the pair $(x,T)$  by a pair $(y, F)\in \Pi(\ell^{(2)}_1, Y)$ we have two possibilities: either $y$ is an extreme point of $B_{\ell^{(2)}_1}$ or $F$ attains its norm in a point that belongs to $\conv\{e_1, e_2\}$, and so attains its norm in both points $e_1, e_2$. In the first case we are forced to have $y=(1,0)$, and then $\|x-y\|=\sqrt{2\eps_0}> \sqrt{2\varepsilon} - \delta$. In the second case we have
$\|F-T\|\geq\|F(e_2)-T(e_2)\|\geq \|F(e_2)\|-\|T(e_2)\|=\sqrt{2\eps_0} > \sqrt{2\varepsilon} - \delta$.
\end{proof}

Our next goal is to estimate the spherical Bishop-Phelps-Bollob\'{a}s modulus from below for the values of parameter $\rho$ between $1/2$ and 1. Fix a $\rho \in [\frac{1}{2},  1)$ and denote $Y_\rho$ the linear space $\R^2$ equipped with the norm 
\begin{equation}\label{eq_norm}
\|x\|_\rho= \max\left\lbrace | x_1  + \left(2-\frac{1}{\rho}\right) x_2| , | x_2 +  \left(2-\frac{1}{\rho}\right) x_1|, | x_1- x_2|\right\rbrace.
\end{equation}
In other words, 
$$\|(x_1,x_2)\| =\begin{cases}
| x_1- x_2|, &\text{if \,} x_1x_2\leq 0;\\
|  x_1  +  \left(2-\frac{1}{\rho}\right) x_2|,&\text{if \,} x_1x_2>0 \,\, \text{and \,} | x_1|>| x_2|;\\
| x_2 +   \left(2-\frac{1}{\rho}\right) x_1|,&\text{if \,}x_1x_2>0  \,\, \text{and \,}  | x_1|\leq | x_2|.
\end{cases}$$
and the unit ball $B_{\rho}$ of $X_\rho$ is the hexagon $absdef$, where 
$a = (1, 0); b = (\frac{\rho}{3\rho-1},\frac{\rho}{3\rho-1}); c = (0, 1); d = (-1, 0); e = (-\frac{\rho}{3\rho-1}, \frac{\rho}{3\rho-1})$; and $f = (0, - 1)$.

The dual space to $Y_\rho$ is $\R^2$ equipped with the polar to $B_{\rho}$ as its unit ball. So, the norm  on $Y^*=Y_\rho^*$ is given by the formula 
\begin{align*}
\|x\|_\rho^*=\|(x_1,x_2)\|^{*}=
\max\left\{| x_1|, | x_2|, \frac{\rho}{3\rho-1}| x_1 + x_2|\right\},
\end{align*}
and the unit ball $B_{\rho}^*$ of  $Y_\rho^*$ is the hexagon $a^*b^*c^*d^*e^*f^*$, where 
$a^* = (1, 2-\frac{1}{\rho}); b^* = \left(2-\frac{1}{\rho} , 1\right); c^* = (-1, 1); d^* = (-1, -\left(2-\frac{1}{\rho}\right)); e^* = (-\left(2-\frac{1}{\rho}\right), -1)$; and $f^* = (1, -1)$.
The corresponding spheres $S_{\rho}$ and $S_{\rho}^*$ are shown on Figures 1 and 2 respectively.
\\

\begin{picture}(300,250)

\put(0,125){\line(1,0){140}}
\put(75,40){\line(0,1){140}}
\put(140,125){\vector(1,0){20}}
\put(75,150){\vector(0,1){60}}
\put(17,67){\begin{tikzpicture}[scale=0.2]
\draw (0,0) --(10,10) -- (17,7) -- (20,0) -- (10,-10) -- (3,-7) -- (0,0);
%\draw[darkgray] [line width=2.5](5,0) -- (5,10);
\draw [fill] (0,0) circle [radius=0.3];;
\draw [fill] (10,10) circle [radius=0.3];;
\draw [fill] (17,7) circle [radius=0.3];;
\draw [fill] (20,0) circle [radius=0.3];;
\draw [fill] (10,-10) circle [radius=0.3];;
\draw [fill] (3,-7) circle [radius=0.3];;
\draw [fill] (5,5) circle [radius=0.3];;
\draw [fill] (13.5,8.5) circle [radius=0.3];;
\draw [fill] (18.5,3.5) circle [radius=0.3];;
\end{tikzpicture}}
\put(140,130){$a$}
\put(126,150){$y_1$}
\put(125,170){$b$}
\put(105,175){$y_2$}
\put(85,185){$c$}
\put(35,160){$y_3$}
\put(15,130){$d$}
\put(30,70){$e$}
\put(85,55){$f$}
\put(65,10){Figure 1}
%\put(180,150){\circle*{3}}
%\put(185,155){a}

\put(200,125){\line(1,0){140}}
\put(275,40){\line(0,1){140}}
\put(340,125){\vector(1,0){20}}
\put(275,150){\vector(0,1){60}}
\put(175,60){\begin{tikzpicture}[scale=0.2]
\draw (0,5) --(0,20) -- (15,20) -- (20,15) -- (20,0) -- (5,0) -- (0,5)node at (-2,5) {$d^*$}node at (-4,20) {$c^*=y^*_3$}node at (20,20) {$b^*=y^*_2$}node at (24,15) {$a^*=y^*_1$}node at (22,0) {$f^*$}node at (3,0) {$e^*$};
\draw [fill] (0,5) circle [radius=0.3];;
\draw [fill] (0,20) circle [radius=0.3];;
\draw [fill] (15,20) circle [radius=0.3];;
\draw [fill] (20,15) circle [radius=0.3];;
\draw [fill] (20,0) circle [radius=0.3];;
\draw [fill] (5,0) circle [radius=0.3];;
%\draw  node at (-1,2) {$d^*$};
\end{tikzpicture}}
\put(250,10){Figure 2}

\end{picture}

\begin{prop} In the space $Y=Y_\rho$
$$\beta(Y)\leq\rho.$$
\end{prop}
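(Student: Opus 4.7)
The plan is to exhibit three pairs $(y_\alpha, y_\alpha^*)$, $\alpha \in \{1,2,3\}$, in $S_{Y_\rho} \times S_{Y_\rho^*}$ witnessing Definition \ref{def:beta} with parameter $\rho$. For the functionals I will take the three dual vertices already named in the excerpt: $y_1^* := a^*$, $y_2^* := b^*$, $y_3^* := c^*$. Condition (iii) then comes for free, since the explicit formula
\[
\|y\|_\rho = \max\bigl\{|y_1 + (2-\tfrac{1}{\rho})y_2|,\ |y_2 + (2-\tfrac{1}{\rho})y_1|,\ |y_1 - y_2|\bigr\}
\]
is precisely $\max\{|a^*(y)|,|b^*(y)|,|c^*(y)|\}$. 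One does \emph{not} adjoin the antipodal functionals $-a^*, -b^*, -c^*$: since condition (iii) uses absolute values they are unnecessary, and inserting them would make (ii) fail with constant $1$.

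The real work lies in choosing the $y_\alpha \in S_{Y_\rho}$. Each $y_\alpha$ is forced onto the edge of the hexagon on which $y_\alpha^*$ attains its supremum $1$: namely $y_1 \in [a,b]$, $y_2 \in [b,c]$, and $y_3 \in [c,d]$. Parameterising $y_1 = (1-t)a + tb$ and computing directly,
\[
b^*(y_1) = \Bigl(2-\tfrac{1}{\rho}\Bigr) + t\,\tfrac{1-\rho}{\rho}, \qquad c^*(y_1) = t-1,
\]
so $|b^*(y_1)|\leq\rho$ forces $t\leq 1-\rho$ while $|c^*(y_1)|\leq\rho$ forces $t\geq 1-\rho$. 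The only admissible value is $t=1-\rho$, giving $y_1 := \rho a + (1-\rho) b$, with both off-diagonal quantities equal to $\rho$.

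By the coordinate-swap symmetry of $\|\cdot\|_\rho$ (which interchanges $a^*$ with $b^*$ and preserves $|c^*|$), the analogous forced choice is $y_2 := \rho c + (1-\rho) b$, and automatically $|a^*(y_2)| = |c^*(y_2)| = \rho$. For $y_3$ I will try the midpoint of $[c,d]$, namely $y_3 := \tfrac12(c + d) = (-\tfrac12,\tfrac12)$; a short calculation yields $|a^*(y_3)| = |b^*(y_3)| = (1-\rho)/(2\rho)$, and the inequality $(1-\rho)/(2\rho) \leq \rho$ is equivalent to $(2\rho-1)(\rho+1)\geq 0$, which holds precisely on $\rho\in[\tfrac12,1)$, the range for which $Y_\rho$ was defined.

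There is no serious obstacle: conditions (i) and (ii) are confirmed by the computations above, and $\|y_\alpha\|_\rho = 1$ for each $\alpha$ follows from the fact that each $y_\alpha$ is a convex combination of points of $S_{Y_\rho}$ together with $y_\alpha^*(y_\alpha) = 1$. The only mildly delicate point is pinning down where the hypothesis $\rho \geq 1/2$ is actually used; it enters exactly once, in ensuring that the symmetric choice $y_3 = \tfrac12(c+d)$ is admissible simultaneously for $|a^*|\leq\rho$ and $|b^*|\leq\rho$.
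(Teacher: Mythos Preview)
Your proof is correct and coincides with the paper's: the functionals $y_1^*=a^*$, $y_2^*=b^*$, $y_3^*=c^*$ and the points $y_1=\rho a+(1-\rho)b$, $y_2=\rho c+(1-\rho)b$, $y_3=\tfrac12(c+d)$ are exactly the paper's choices (the paper writes them in explicit coordinates $y_1=\bigl(\tfrac{2\rho^2}{3\rho-1},\tfrac{\rho-\rho^2}{3\rho-1}\bigr)$, etc., and verifies the nine pairings $y_i^*(y_j)$ directly). Your presentation adds the geometric derivation showing that $t=1-\rho$ is forced, and correctly isolates the single place where the hypothesis $\rho\geq\tfrac12$ is needed, namely the bound $|a^*(y_3)|=(1-\rho)/(2\rho)\leq\rho$.
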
  

\begin{proof} 
Consider two sets: 

$$\left\{y_1 = \left(\frac{2\rho^2}{3\rho-1}, \frac{\rho-\rho^2}{3\rho-1}\right), y_2 = \left(\frac{\rho-\rho^2}{3\rho-1}, \frac{2\rho^2}{3\rho-1}\right), y_3 = \left(-\frac{1}{2}, \frac{1}{2}\right)\right\}\subset S_{Y_\rho}$$
and  $\{y_1^* = (1, 2-\frac{1}{\rho}), y_2^* = ( 2-\frac{1}{\rho}, 1), y_3^* = (-1, 1)\}\subset S_{Y_\rho^*}$.

Then   $\norm{y} = \sup\{|y_n^*(y)|: n=1,2,3\}$ for all $y\in Y_\rho$, $y^*_n(y_n) = 1$ for $n=1,2,3$ and
$|y^*_i(y_j)|\leq \rho$ for all $i \neq j$. Indeed,
$y^*_1(y_1) = \frac{2\rho^2+2\rho-2\rho^2-1+\rho}{3\rho-1}=1$; \quad
$y^*_1(y_2) = \frac{\rho-\rho^2+4\rho^2-2\rho}{3\rho-1}=\rho$; \quad
$y^*_1(y_3) = \frac{-1}{2}+1-\frac{1}{2\rho}=-\frac{1-\rho}{2\rho}\geq-\rho $, consequently $|y^*_1(y_3)| \leq \rho $ (here appears the restriction $\rho\geq1/2$); \quad
$y^*_2(y_1) = y^*_1(y_2)= \rho$; \quad
$y^*_2(y_2) = y^*_1(y_1)=1$; \quad
$y^*_2(y_3) = - y^*_1(y_3)\leq \rho$; \\
$|y^*_3(y_1)| = \left|\frac{-2\rho^2+\rho-\rho^2}{3\rho-1}\right|= \rho$; 
$y^*_3(y_2) = -y^*_3(y_1)=\rho$; and finally
$y^*_3(y_3) = \frac{1}{2}+\frac{1}{2}=1$.
\end{proof}

\begin{theorem}\label{th-hexagon}
 Let $ \rho \in [1/2, 1)$, $0<\eps< 1$. Then, in the space $Y = Y_\rho$ 
$$
\Phi^S(\ell_1^{(2)},Y,\varepsilon)\geq \min \left\lbrace\sqrt{\frac{2\rho\eps}{1-\rho}}, 1\right\rbrace.
$$
\end{theorem}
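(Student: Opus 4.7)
The plan is to build an explicit pair $(x, T) \in \Pi^S_\eps(\ell_1^{(2)}, Y_\rho)$ exploiting the hexagonal geometry around the vertex $b$: place $T(e_1)$ on the edge $[a,b]$ and $T(e_2)$ on the edge $[b,c]$, close enough to $b$ that $T$ nearly attains its norm along the segment $[e_1, e_2]$ yet far enough that it does not attain there. Write $\tau := \sqrt{2\rho\eps/(1-\rho)}$ and first treat the regime $\tau \leq 1$. Set $\lambda := 1-\tau \in [0, 1)$, $t := \tau/2$, $x := (1-t, t) \in S_{\ell_1^{(2)}}$, and
\[
T(e_1) := (1-\lambda)\, a + \lambda\, b, \qquad T(e_2) := (1-\lambda)\, c + \lambda\, b,
\]
so $T(e_i) \in S_{Y_\rho}$ and $\|T\| = 1$. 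Using $y_1^*(a) = y_1^*(b) = 1$ and $y_1^*(c) = 2-1/\rho$, a direct computation gives $y_1^*(T(x)) = 1 - (1-\lambda)\, t\, (1-\rho)/\rho$, which equals $1 - \eps$ by the choice of $t$ and $\lambda$; a brief comparison shows that $y_1^*(T(x))$ also dominates $y_2^*(T(x))$ (the difference is $(1-2t)(1-\lambda)(1-\rho)/\rho$) and $|y_3^*(T(x))|$, so $\|T(x)\| = 1-\eps$. A passage to $\eps' < \eps$ in the construction with $\eps' \nearrow \eps$ turns this into the strict inequality needed for membership in $\Pi^S_\eps$.

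Now take an arbitrary $(y, F) \in \Pi(\ell_1^{(2)}, Y_\rho)$ and assume $\|x - y\|_1 < \tau = 2t$; the goal is to derive $\|T - F\| \geq 1-\lambda$. A routine case analysis on the sign pattern of $(y_1, y_2) \in S_{\ell_1^{(2)}}$ shows that $y$ must lie in the open segment $(e_1, e_2)$: for $y_1, y_2 \geq 0$ one has $\|x-y\|_1 = 2|y_1 - (1-t)|$, forcing $y_1 \in (1-2t, 1)$ and $y_2 \in (0, 2t)$; the mixed case $y_1 \geq 0 \geq y_2$ yields $\|x-y\|_1 \geq 2t$; and the remaining two sign-patterns give $\|x-y\|_1 \geq 2(1-t) > 2t$. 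Consequently $y \notin \{\pm e_1, \pm e_2\}$, so Remark \ref{rem_attain} forces $[F(e_1), F(e_2)]$ to lie on $S_{Y_\rho}$, i.e., $F(e_1)$ and $F(e_2)$ belong to a common edge of the hexagon.

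The crucial step uses the norm-one functional $y_3^* = (-1, 1)$: by construction $y_3^*(T(e_1)) = -(1-\lambda)$ and $y_3^*(T(e_2)) = 1 - \lambda$, while inspection of the six edges shows that $y_3^*$ ranges in $[0, 1]$ on $[b,c]$, $[c,d]$, $[d,e]$ and in $[-1, 0]$ on $[a,b]$, $[e,f]$, $[f,a]$. If $F(e_1), F(e_2)$ lie on an edge of the first triple, then $y_3^*(F(e_1)) \geq 0$ gives
\[
\|T - F\| \geq |y_3^*((T-F)(e_1))| = (1-\lambda) + y_3^*(F(e_1)) \geq 1 - \lambda;
\]
otherwise $F(e_1), F(e_2)$ lie on an edge of the second triple and the analogous estimate for $(T - F)(e_2)$ yields $\|T - F\| \geq 1-\lambda$. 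Hence $\max\{\|x-y\|, \|T-F\|\} \geq \tau$. For $\tau > 1$, monotonicity of $\Phi^S(\ell_1^{(2)}, Y_\rho, \cdot)$ applied at $\eps_0 := (1-\rho)/(2\rho)$ (where the preceding argument gives $\Phi^S \geq 1$) yields $\Phi^S \geq 1$, completing the bound $\Phi^S \geq \min\{\tau, 1\}$. The main obstacle is recognising the clean partition of hexagonal edges by the sign of $y_3^*$: our choice of $T$ places $y_3^*(T(e_1))$ and $y_3^*(T(e_2))$ symmetrically about $0$ with gap $2(1-\lambda)$, so any $F$ confined to a single edge must err from $T$ by at least $1-\lambda$ in the $y_3^*$ direction, which is exactly what makes the bound $\sqrt{2\rho\eps/(1-\rho)}$ sharp.
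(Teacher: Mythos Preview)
Your proof is correct and uses the identical construction as the paper: the same point $x=(1-\tau/2,\tau/2)$ and the same operator $T$ with $T(e_1)=\tau a+(1-\tau)b$, $T(e_2)=\tau c+(1-\tau)b$ (in your notation $1-\lambda=\tau$), together with the same overall dichotomy --- either $y$ is forced to be $e_1$ at distance $\tau$, or $F$ must attain on the whole segment $[e_1,e_2]$ so that $F(e_1),F(e_2)$ share an edge of the hexagon.

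The one genuine difference is in the last step. The paper argues geometrically: it notes that $\|T(e_i)-b\|=\tau$ and that $T(e_1),T(e_2)$ sit on the two edges meeting only at $b$, so no single edge can receive both $F(e_1)$ and $F(e_2)$ while keeping $\|T(e_i)-F(e_i)\|<\tau$. Your argument replaces this by the observation that $y_3^*=(-1,1)$ is nonnegative on the three edges $[b,c],[c,d],[d,e]$ and nonpositive on $[a,b],[e,f],[f,a]$, while $y_3^*(T(e_1))=-\tau$ and $y_3^*(T(e_2))=\tau$; hence any common edge forces $|y_3^*((T-F)(e_i))|\ge\tau$ for one of the $i$. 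This is a cleaner and more explicit way to handle all six edges at once (the paper's phrasing leaves implicit why edges far from $b$, such as $[f,a]$ or $[c,d]$, are excluded), and it makes transparent why the quantity $\tau=\sqrt{2\rho\eps/(1-\rho)}$ appears: it is exactly the common distance $|y_3^*(T(e_i))|$ from the separating hyperplane $\{y_3^*=0\}$.
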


\begin{proof}
To prove our statement we must show that $\Phi^S(\ell_1^{(2)},Y,\varepsilon)\geq \sqrt{\frac{2\rho\eps}{1-\rho}}$ for $\eps\in\left(0,\frac{1-\rho}{2\rho}\right)$. The remaining inequality $\Phi^S(\ell^{(2)}_1,Y,\varepsilon)\geq 1$ for  $\eps \geq \frac{1-\rho}{2\rho}$ will follow from the monotonicity of $\Phi^S(\ell^{(2)}_1,Y,\cdot)$. So, for every $\eps\in \left(0,\frac{1-\rho}{2\rho}\right)$ and for every $\delta > 0$ we are looking for a pair $(x, T) \in \Pi^S_\eps(\ell^{(2)}_1, Y)$ such that 
\begin{equation*}
\max\left\{ \norm{x-y}, \norm{T-F}\right\}\geq \sqrt{\frac{2\rho\eps}{1-\rho}}  - \delta
\end{equation*}
for every pair $(y, F) \in \Pi(\ell^{(2)}_1, Y)$.

Fix an $\eps_0 < \eps$ such that $\sqrt{\frac{2\rho\eps_0}{1-\rho}} > \sqrt{\frac{2\rho\eps}{1-\rho}} - \delta$. Consider the point 
$$x=\left(1-\frac{\sqrt{2\rho\eps_0}}{2\sqrt{1-\rho}}, \frac{\sqrt{2\rho\eps_0}}{2\sqrt{1-\rho}} \right)\in S_{\ell_1^{(2)}}$$
 and $T\in L(\ell_1^{(2)},Y)$ such that 
 $$T(e_i)=\sqrt{\frac{2\rho\eps_0}{1-\rho}}e_i+\left(1-\sqrt{\frac{2\rho\eps_0}{1-\rho}}\right)\cdot b,$$ 
 where $b=\left(\frac{\rho}{3\rho-1}, \frac{\rho}{3\rho-1}\right)$ is the  extreme point of $S_Y$ from Figure 1.
 Notice that $\norm{T} = \norm{T(e_1)} = \norm{T(e_2)} = 1$ and $\|T(x)\|=1-\eps_0> 1-\eps$. 
 
The part of $S_{\ell_1^{(2)}}$ consisting of points that have a distance to $x$ less than or equal to $\sqrt{\frac{2\rho\eps_0}{1-\rho}}$ lies on the segment $[e_1, e_2)$. Consequently, in order to approximate the pair $(x,T)$ we have two options: to approximate the point $x$ by $e_1$, and then we can take $F:=T$; or as  $F$ choose an operator attaining its norm in some point of  $(e_1, e_2)$ (and hence in all points of  $[e_1, e_2]$), and then we can take $y:=x$.

In the first case we have $\norm{T-F}=0$ and $\norm{x-y}=\sqrt{\frac{2\rho\eps_0}{1-\rho}}> \sqrt{\frac{2\rho\eps}{1-\rho}} - \delta$. 
In the second case let us demonstrate that 
$$\|T-F\| = \max_i \|T(e_i)-F(e_i)\|\geq \sqrt{\frac{2\rho\eps_0}{1-\rho}}.$$
If it is not so, then for both values of $i = 1, 2$
$$
\|T(e_i)-F(e_i)\|  <  \sqrt{\frac{2\rho\eps_0}{1-\rho}} = \|T(e_i) - b\| .
$$
Since  $F$ attains its norm  in all points of  $[e_1, e_2]$, the line segment  $F([e_1, e_2])$ should lie on a line segment of $S_Y$, but the previous inequality makes this impossible, because $T(e_1)$ and $T(e_2)$ lie on different line segments of   $S_Y$ with $b$ being their only common point.
\end{proof}
\subsection{Non-continuity of the Bishop-Phelps-Bollob\'as modulus for operators}
It is known \cite[Theorem 3.3]{Modul2016} that both (usual and spherical) Bishop-Phelps-Bollob\'as moduli for functionals are continuous with respect to $X$.
As a consequence of Theorem \ref{th-hexagon} we will obtain that the  Bishop-Phelps-Bollob\'as moduli of a pair $(X, Y)$ as a function of $Y$ are not continuous in the sense of Banach-Mazur distance. 

Let $X$ and $Y$ be isomorphic. Recall that the Banach-Mazur distance between $X$ and $Y$ is the following quantity
$$d(X, Y) = \inf \{\norm{T}\norm{T^{-1}}:\quad T : X \rightarrow  Y  \text{ isomorphism}.\}$$ 

A sequence $Z_n$ of Banach spaces is said to be convergent to a Banach space $Z$ if $d(Z_n, Z) \underset{n \rightarrow \infty}{\longrightarrow }1.$

Notice, that $Y_\rho\underset{\rho\rightarrow 1}{\longrightarrow }\ell_1^{(2)}$.
\begin{theorem} \label{theor-non-contin}
Let $ \rho \in [1/2, 1)$ and $Y_\rho$ be the spaces defined in \eqref{eq_norm}. Then for every $\eps \in (0, \frac12)$ 
$$
\Phi(\ell_1^{(2)},Y_\rho,\varepsilon)\underset{\rho\rightarrow 1}{\centernot\longrightarrow}\Phi(\ell_1^{(2)},\ell_1^{(2)},\varepsilon), \, \mbox{ and } \, \Phi^S(\ell_1^{(2)},Y_\rho,\varepsilon)\underset{\rho\rightarrow 1}{\centernot\longrightarrow}\Phi^S(\ell_1^{(2)},\ell_1^{(2)},\varepsilon).
$$
\end{theorem}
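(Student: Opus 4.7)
The plan is to combine the explicit lower bound from Theorem \ref{th-hexagon}, which blows up as $\rho \to 1^-$, with an explicit evaluation of $\Phi^S(\ell_1^{(2)}, \ell_1^{(2)}, \eps)$ and $\Phi(\ell_1^{(2)}, \ell_1^{(2)}, \eps)$ that gives a strictly smaller value for $\eps \in (0, 1/2)$. The key point is that although the sequence $Y_\rho$ converges to $\ell_1^{(2)}$ in Banach--Mazur distance, the moduli fail to pass to the limit.

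The first step is to check that $\ell_1^{(2)}$ itself satisfies $\beta(\ell_1^{(2)}) = 0$. The non-obvious point (and in my view the main obstacle of the argument) is to notice that one does not need all four extreme points of $B_{(\ell_1^{(2)})^*}$: it is enough to take $y_1^* = (1,1)$, $y_2^* = (1,-1)$ together with $y_1 = (1/2, 1/2)$, $y_2 = (1/2, -1/2)$ (and their opposites). Then $y_i^*(y_i) = 1$, the cross terms $y_1^*(y_2)$ and $y_2^*(y_1)$ both vanish, and the identity $\|(t_1, t_2)\|_1 = \max\{|t_1 + t_2|, |t_1 - t_2|\}$ shows the two functionals are $1$-norming. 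Consequently Theorem \ref{th:est_beta2} yields the equality $\Phi^S(\ell_1^{(2)}, \ell_1^{(2)}, \eps) = \min\{\sqrt{2\eps}, 1\}$, and Theorem \ref{th:ell_1} at $\rho = 0$ (which gives the matching upper bound $\sqrt{2\eps}$) combined with the trivial inequality $\Phi \geq \Phi^S$ yields the same value for $\Phi(\ell_1^{(2)}, \ell_1^{(2)}, \eps)$. In particular, for $\eps \in (0, 1/2)$ both moduli at the pair $(\ell_1^{(2)}, \ell_1^{(2)})$ equal $\sqrt{2\eps}$, which is strictly less than $1$.

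The second step is to invoke Theorem \ref{th-hexagon}, which asserts that $\Phi^S(\ell_1^{(2)}, Y_\rho, \eps) \geq \min\{\sqrt{2\rho\eps/(1-\rho)}, 1\}$ for $\rho \in [1/2, 1)$. Since $2\rho\eps/(1-\rho) \to +\infty$ as $\rho \to 1^-$, we have $\Phi^S(\ell_1^{(2)}, Y_\rho, \eps) \geq 1$, and a fortiori $\Phi(\ell_1^{(2)}, Y_\rho, \eps) \geq 1$, for all $\rho$ sufficiently close to $1$ (concretely, for $\rho \geq 1/(1+2\eps)$). Combining with the first step gives
\[
\liminf_{\rho \to 1^-} \Phi^S(\ell_1^{(2)}, Y_\rho, \eps) \geq 1 > \sqrt{2\eps} = \Phi^S(\ell_1^{(2)}, \ell_1^{(2)}, \eps),
\]
and the same for $\Phi$, which is exactly the asserted non-convergence. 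The preliminary remark that $Y_\rho \to \ell_1^{(2)}$ in Banach--Mazur distance requires a short separate verification: the identity maps satisfy $\|\mathrm{id}: \ell_1^{(2)} \to Y_\rho\| = 1$ (since $B_{\ell_1^{(2)}} \subset B_{Y_\rho}$ as $\|b\|_1 = 2\rho/(3\rho - 1) \geq 1$), while $\|\mathrm{id}: Y_\rho \to \ell_1^{(2)}\| = 2\rho/(3\rho - 1)$ is attained at the extra vertex $b$, so that $d(Y_\rho, \ell_1^{(2)}) \leq 2\rho/(3\rho-1) \to 1$ as $\rho \to 1^-$.
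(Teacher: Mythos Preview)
Your proof is correct and follows essentially the same route as the paper: bound the limit moduli above by $\sqrt{2\eps}<1$ using that $\beta(\ell_1^{(2)})=0$, and bound the $Y_\rho$ moduli below by Theorem~\ref{th-hexagon}, whose right-hand side tends to $1$ as $\rho\to 1^-$. The only cosmetic differences are that the paper invokes Theorem~\ref{th:est_beta} (rather than Theorems~\ref{th:ell_1} and~\ref{th:est_beta2}) for the upper bound and leaves implicit both the verification of $\beta(\ell_1^{(2)})=0$ and the Banach--Mazur convergence $Y_\rho\to\ell_1^{(2)}$, which you spell out explicitly.
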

\begin{proof}
On the one hand, from the Theorem \ref{th:est_beta} with $\rho=0$ we get for $\eps \in (0, \frac12)$ 
$$ \Phi^S(\ell_1^{(2)},\ell_1^{(2)},\varepsilon)\leq\Phi(\ell_1^{(2)},\ell_1^{(2)},\varepsilon)\leq \sqrt{2\eps} < 1.
$$
On the other hand, Theorem \ref{th-hexagon} gives $\Phi(\ell_1^{(2)},Y_\rho,\varepsilon)\geq \Phi^S(\ell_1^{(2)},Y_\rho,\varepsilon)\geq \min \left\lbrace\sqrt{\frac{2\rho\eps}{1-\rho}}, 1\right\rbrace \xrightarrow[\rho \to 1]{} 1$. 
\end{proof}

\subsection{Behavior of the $\Phi^S(X,Y,\varepsilon)$ when $\eps \rightarrow 0$}

In subsection \ref{ssectestbelow} using two-dimensional spaces $Y$ we were able to give the estimation only for $\rho\in [1/2, 1)$. This is not surprising, because in every $n$-dimensional Banach space with the property $\beta$ we have either $\rho=0$, or $\rho\geq\frac{1}{n}$. We did not find any mentioning of this in literature, so we give the proof of this fact.
\begin{prop}\label{prop_beta}
Let $Y^{(n)}$ be a Banach space of dimension $n$ with $\beta(Y^{(n)})\leq\rho< \frac{1}{n}$. Then $Y^{(n)}$ is isometric to $\ell^{(n)}_\infty$, i.e. $\beta(Y^{(n)})=0$.
\end{prop}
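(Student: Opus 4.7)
The plan is to force $|\Lambda| = n$; once this is established, property (iii) of Definition~\ref{def:beta} automatically makes the map $y \mapsto (y^*_{\alpha_1}(y), \dots, y^*_{\alpha_n}(y))$ an isometric isomorphism from $Y^{(n)}$ onto $\ell^{(n)}_\infty$.

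For the lower bound $|\Lambda| \ge n$, I would fix any $n$ distinct indices $\alpha_1,\dots,\alpha_n \in \Lambda$ and consider the $n\times n$ matrix with entries $y^*_{\alpha_i}(y_{\alpha_j})$. Its diagonal equals $1$ while its off-diagonal entries have absolute value at most $\rho$; since $(n-1)\rho \le (n-1)/n < 1$ the matrix is strictly diagonally dominant, hence invertible. Therefore $\{y^*_{\alpha_i}\}_{i=1}^n$ and $\{y_{\alpha_j}\}_{j=1}^n$ are each linearly independent, and since the family $\{y^*_\alpha\}$ is $1$-norming it spans $Y^*$ (any $y$ annihilated by all $y^*_\alpha$ would have $\|y\|=0$), so $|\Lambda| \ge n$.

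The heart of the argument is the reverse inequality $|\Lambda| \le n$. Suppose for contradiction there exists $\beta \in \Lambda$ different from every $\alpha_i$. Expand $y_\beta = \sum_j d_j\, y_{\alpha_j}$ in the basis above. Applying $y^*_{\alpha_i}$ gives
\[
d_i \;=\; y^*_{\alpha_i}(y_\beta) \;-\; \sum_{j\ne i} d_j\, y^*_{\alpha_i}(y_{\alpha_j}),
\]
where every quantity on the right has absolute value at most $\rho$ (for the first term we use $\beta \ne \alpha_i$). Hence $|d_i|\le \rho + \rho\sum_{j\ne i}|d_j|$; summing over $i$ yields $(1-(n-1)\rho)\sum_i |d_i| \le n\rho$, so
\[
\sum_{i=1}^n |d_i| \;\le\; \frac{n\rho}{1-(n-1)\rho}.
\]
Now apply $y^*_\beta$ to the expansion: $1 = y^*_\beta(y_\beta) = \sum_j d_j\, y^*_\beta(y_{\alpha_j})$, and since $\beta \ne \alpha_j$ each $|y^*_\beta(y_{\alpha_j})|\le\rho$. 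Combining with the previous bound,
\[
1 \;\le\; \rho\sum_{j=1}^n |d_j| \;\le\; \frac{n\rho^2}{1-(n-1)\rho},
\]
which rearranges to $n\rho^2 + (n-1)\rho \ge 1$. But the polynomial $n\rho^2+(n-1)\rho-1$ vanishes at $\rho = 1/n$ and is strictly increasing on $[0,\infty)$, so this contradicts $\rho < 1/n$.

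Consequently $|\Lambda| = n$, and the isometric isomorphism onto $\ell^{(n)}_\infty$ described in the opening paragraph finishes the proof; it simultaneously shows $\beta(Y^{(n)})=0$, since in $\ell^{(n)}_\infty$ property $\beta$ is realized with $\rho=0$ by the standard basis and coordinate functionals. The step I expect to be the main obstacle is precisely the quantitative double-application trick: first bounding $\sum |d_i|$ by expanding against the $y^*_{\alpha_i}$, and then combining it with $y^*_\beta(y_\beta)=1$ to squeeze out the sharp quadratic inequality in $\rho$, since the bound is tight exactly at $\rho = 1/n$ and no looser estimate will do.
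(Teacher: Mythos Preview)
Your argument is correct. The final isometry step and the lower bound $|\Lambda|\ge n$ match the paper essentially verbatim; the difference lies in how you rule out $|\Lambda|>n$. The paper (Lemma~\ref{prop_beta_lemma}) does this in one stroke: given any $n+1$ of the $y_\alpha$, take a combination $\sum_{i=1}^{n+1} a_i y_i$ with $\max_i|a_i|=1$, apply $y^*_j$ at an index where $|a_j|=1$, and obtain
\[
\Bigl\|\sum_i a_i y_i\Bigr\|\ \ge\ \Bigl|y^*_j\Bigl(\sum_i a_i y_i\Bigr)\Bigr|\ \ge\ 1-\sum_{i\ne j}|a_i|\rho\ \ge\ 1-n\rho\ >\ 0,
\]
contradicting $\dim Y^{(n)}=n$. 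Your route instead fixes a basis $\{y_{\alpha_j}\}_{j=1}^n$, expands the extra $y_\beta$ in it, bounds $\sum_j|d_j|$ via the $y^*_{\alpha_i}$, and then squeezes with $y^*_\beta(y_\beta)=1$ to reach the sharp inequality $n\rho^2+(n-1)\rho\ge 1$. Both arguments use only $\rho<1/n$ and both are tight there, but the paper's is a single estimate where yours needs the two-pass coefficient bound; what your version buys is an explicit quantitative control on the expansion coefficients, which the paper's argument does not extract. One expository quibble: your ``lower bound'' paragraph presupposes $n$ distinct indices before it has established $|\Lambda|\ge n$; the $1$-norming argument you give at the end of that paragraph should logically come first, with the diagonal-dominance matrix serving only to certify that the chosen $\{y_{\alpha_j}\}$ form a basis for the next step.
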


We need one preliminary result.
\begin{lemma}\label{prop_beta_lemma}
Let $Y^{(n)}$ be a Banach space of dimension $n$ with $\beta(Y^{(n)})\leq\rho< \frac{1}{n}$ and $\{y_\alpha: \alpha\in \Lambda\}\subset S_Y$, $\{y^*_\alpha: \alpha\in  \Lambda\}\subset S_Y^*$ be the sets from Definition \ref{def:beta}.
Then $|\Lambda|=n$.
\end{lemma}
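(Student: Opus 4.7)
The plan is to show both inequalities $|\Lambda|\ge n$ and $|\Lambda|\le n$ separately. The first one is immediate: condition (iii) of Definition \ref{def:beta} says that $\{y^*_\alpha:\alpha\in\Lambda\}$ is a norming subset of $S_{Y^*}$, so its linear span must be all of $Y^{(n)*}$ (otherwise some nonzero $y\in Y^{(n)}$ would be annihilated by every $y^*_\alpha$, contradicting the norming condition). Since $\dim Y^{(n)*}=n$, this forces $|\Lambda|\ge n$.

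For the reverse inequality I would argue by contradiction: suppose $|\Lambda|\ge n+1$ and pick $n+1$ distinct indices $\alpha_1,\dots,\alpha_{n+1}\in\Lambda$. Because $\dim Y^{(n)*}=n$, the $n+1$ functionals $y^*_{\alpha_1},\dots,y^*_{\alpha_{n+1}}$ are linearly dependent, so there exist scalars $c_1,\dots,c_{n+1}$, not all zero, with $\sum_{i=1}^{n+1}c_iy^*_{\alpha_i}=0$. Evaluating this identity at $y_{\alpha_j}$ and using $y^*_{\alpha_j}(y_{\alpha_j})=1$ together with $|y^*_{\alpha_i}(y_{\alpha_j})|\le\rho$ for $i\ne j$, I obtain
\begin{equation*}
c_j=-\sum_{i\ne j}c_iy^*_{\alpha_i}(y_{\alpha_j}),\qquad |c_j|\le\rho\sum_{i\ne j}|c_i|.
\end{equation*}

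Now pick $j$ for which $|c_j|=\max_i|c_i|$; since the $c_i$ are not all zero, $|c_j|>0$. The previous inequality gives $|c_j|\le \rho\cdot n\cdot|c_j|$, hence $\rho\ge \tfrac{1}{n}$, contradicting the hypothesis $\rho<\tfrac{1}{n}$. Therefore $|\Lambda|\le n$, and combining with the first step yields $|\Lambda|=n$.

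The only place where any care is needed is the max-coefficient trick in the dependence argument, but once one observes that there are exactly $n$ terms in the sum indexed by $i\ne j$, the bound $n\rho\ge1$ drops out immediately, so I do not anticipate any real obstacle.
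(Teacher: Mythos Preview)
Your proof is correct and follows essentially the same strategy as the paper's: both establish $|\Lambda|\ge n$ from the norming condition, then use the max-coefficient trick together with $\rho<\tfrac1n$ to rule out $|\Lambda|\ge n+1$. The only cosmetic difference is that the paper works in the primal space (showing that any $n+1$ of the vectors $y_\alpha$ are linearly independent by testing against the $y^*_\alpha$), whereas you work in the dual (observing that $n+1$ of the functionals $y^*_\alpha$ must be dependent and evaluating the dependence relation at the $y_\alpha$); the arithmetic in both cases is identical.
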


\begin{proof}
$|\Lambda|\geq n$, because $\{y^*_\alpha: \alpha\in  \Lambda\} $ is a $1$-norming subset. Assume that $|\Lambda|> n$. We are going to demonstrate that every subset of $\{y_\alpha: \alpha\in \Lambda\}$ consisting of $n+1$ elements is linearly independent.

Without loss of generality we can take a subset $\{y_i\}^{n+1}_{i=1}\subset\{y_\alpha: \alpha\in \Lambda\}$. Consider the corresponding linear combination $\sum \limits_{i=1}^{n+1} a_i y_i$ with $\max|a_i|=1$ and let us check that $\sum \limits_{i=1}^{n+1} a_i y_i\neq 0$. Let $j\leq n+1$ be a number such that $|a_j|=1$. Then we can estimate
\begin{align*}
\norm{\sum \limits_{i=1}^{n+1} a_i y_i}\geq \left|y_j^*\left(\sum \limits_{i=1}^{n+1} a_i y_i\right)\right|=\left|a_j y_j^*(y_j)+ \sum \limits_{\substack{i=1 \\ i\neq j}}^{n+1} a_i y_j^*(y_i)\right|\geq 1 -\sum \limits_{\substack{i=1 \\ i\neq j}}^{n+1} |a_i| \rho>0.
\end{align*}
It follows that $Y^{(n)}$ contains $n+1$ linearly independent elements. This contradiction completes the proof.
\end{proof}

\begin{proof}[Proof of Proposition \ref{prop_beta}]
According to  Definition \ref{def:beta} together with Lemma \ref{prop_beta_lemma} there are two sets $\{y_i\}_{i=1}^{n}\subset S_{Y^{(n)}}$, $\{y^*_i\}_{i=1}^{n}\subset S^*_{Y^{(n)}}$ such that \\
$y^*_i(y_i) = 1,$\\
$|y^*_i(y_j)| < 1/n \text{ if } i \neq j,$\\
$\norm{y} = \sup\{|y_i^*(y)|: i=1..n\} \text{, for all } y\in Y.$

Let us define the operator $U: Y^{(n)}\rightarrow \ell_\infty^{(n)}$ by the formula:
$$U(y):=(y_1^*(y), y_2^*(y),...,y_n^*(y)).$$
Obviously, $\|U(y)\|=\|y\|$ for all $y\in Y^{(n)}$, so, $U$ is isometry. Since $\dim Y^{(n)} =\dim \ell_\infty^{(n)}$, the operator $U$ is bijective. This means that $Y^{(n)} $ is isometric to $\ell_\infty^{(n)}$, and since $\beta(\ell_\infty^{(n)})=0$, we have that $\beta(Y^{(n)})=0$.
\end{proof}

So, in order to obtain all possible values of parameter $\rho$ we must consider spaces of higher dimensions.
For every fixed dimension $n$ fix a $\rho\in [\frac{1}{n}, 1)$ and denote $Z=Z^{(n)}_\rho$ the linear space $\R^n$ equipped with the norm 
\begin{align}\label{norm1}
\|x\| =\max\left\{|x_1|, | x_2|,...,| x_n|, \frac{1}{\rho n} \left|\sum \limits_{i=1}^{n} x_i \right|\right\}.
\end{align}

\begin{prop}
Let $Z=Z^{(n)}_\rho$ with $n \ge 2$ and $\rho\in [\frac{1}{n}, 1)$. Then
$$\beta(Z)\leq\rho.$$
\end{prop}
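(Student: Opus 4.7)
The plan is to exhibit explicit norming sets $\{z_\alpha\}_{\alpha\in\Lambda} \subset S_Z$ and $\{z^*_\alpha\}_{\alpha\in\Lambda} \subset S_{Z^*}$ witnessing $\beta(Z) \leq \rho$. Since the norm \eqref{norm1} is defined as the maximum of $n+1$ absolute values of linear functionals on $\R^n$, the natural choice is to take the index set $\Lambda = \{1, \ldots, n, s\}$ with $n+1$ elements, and to set $z^*_i := e_i^*$ for $1 \leq i \leq n$ together with $z^*_s := \frac{1}{\rho n}(1, 1, \ldots, 1)$. Each of these functionals lies in $S_{Z^*}$, and the norming condition (iii) of Definition \ref{def:beta} is immediate from the very definition of $\|\cdot\|$ on $Z^{(n)}_\rho$.

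For the vectors, I would set $z_s := (\rho, \rho, \ldots, \rho)$, which has coordinate-max $\rho \leq 1$ and coordinate-sum $\rho n$, hence $\|z_s\| = 1$ and $z^*_s(z_s) = 1$. For each $i \leq n$ I would look for $z_i$ of the symmetric form $(z_i)_i = 1$ and $(z_i)_j = a$ for $j \neq i$, with a single constant $a$ to be determined. The crucial constraint $|z^*_s(z_i)| \leq \rho$ becomes $|1 + (n-1)a| \leq \rho^2 n$, and the natural choice that saturates it is
\begin{equation*}
a := \frac{\rho^2 n - 1}{n - 1},
\end{equation*}
which also forces $\|z_i\| = 1$ (the coordinate-max is $1$ once $|a| \le 1$, and $|z^*_s(z_i)| = \rho \leq 1$).

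With these choices, condition (i) of Definition \ref{def:beta} is transparent, and condition (ii) splits into three routine verifications covering all off-diagonal pairs: $e_j^*(z_i) = a$ for $i \neq j$, $e_i^*(z_s) = \rho$, and $z^*_s(z_i) = \rho$ by the very choice of $a$. The only substantive step---the place where the hypothesis $\rho \geq 1/n$ actually enters---is the inequality $|a| \leq \rho$. A short computation shows this is equivalent to $(\rho - 1)(\rho n + 1) \leq 0$ in the regime $\rho^2 n \geq 1$ (trivially true since $\rho < 1$), and to $\rho(\rho n + n - 1) \geq 1$ in the regime $\rho^2 n < 1$; the latter is an equality at $\rho = 1/n$ and the left-hand side is strictly increasing in $\rho$ on $(0, 1)$, so it persists on the whole range $\rho \in [1/n, 1)$. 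In view of Proposition \ref{prop_beta}, the lower bound $\rho \geq 1/n$ cannot be dropped, so this is exactly the point where the restriction on $\rho$ is forced to play a role.
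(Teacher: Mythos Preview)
Your proof is correct and follows essentially the same scheme as the paper's: the same functionals $e_1^*,\ldots,e_n^*,\,\frac{1}{\rho n}\sum e_i^*$, the same vector $z_s=(\rho,\ldots,\rho)$, and vectors $z_i$ of the symmetric form $e_i + a\sum_{j\neq i}e_j$. The only difference is the choice of the free constant: the paper takes $a=-\frac{1}{n-1+\rho n}$, which makes $z^*_s(z_i)$ and $z^*_j(z_i)$ both equal to $\frac{1}{n-1+\rho n}\le\rho$, whereas you take $a=\frac{\rho^2 n-1}{n-1}$, which saturates $z^*_s(z_i)=\rho$ instead. Both choices lead to the same verification that the binding inequality $|a|\le\rho$ reduces to $(\rho+1)(\rho n-1)\ge 0$, i.e.\ $\rho\ge 1/n$.
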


\begin{proof} 
Consider two sets: 

$$\left\{y_j =-\frac{1}{n-1+\rho n}\underset{i\neq j}{\sum \limits_{i=1}^{n}}e_i + e_j, \quad z=\rho\sum \limits_{i=1}^{n}e_i \right\}\subset S_Z$$
  $$\left\{y_j^* = e_j, z^*=\frac{1}{\rho n}\sum \limits_{i=1}^{n}e_i\right\}\subset S_{Z^*}.$$

It follows directly from  \eqref{norm1} that the subset $\left\{\{y_j^*\}^n_{i=1}, z^*\right\}$ is $1$-norming. Also,

$y^*_i(y_i)=1, \quad |y^*_j(y_i)| = |-\frac{1}{n-1+\rho n}|\leq \rho, \quad y^*_j(z) = \rho$,
$z^*(z)=1, \quad z^*(y_i) =\frac{1}{n-1+\rho n}\leq \rho$.
\end{proof}

Remark that in all our estimations of $\Phi^S(X,Y,\eps)$ appears the multiplier $\sqrt{2\eps}$. So, in order to measure the behavior of $\Phi^S(X,Y,\eps)$ in $0$, it is natural to introduce the following quantity
\begin{align*}
\Psi(X, Y):=\underset{\eps\rightarrow 0}{\limsup}\frac{\Phi^S(X,Y,\eps)}{\sqrt{2\eps}}.
\end{align*}
 Also  define
\begin{align*}
\Psi(\rho):=\underset{Y:\beta(Y)=\rho}{\sup}\underset{X}{\sup} \,\underset{\eps\rightarrow 0}{\limsup}\Psi(X, Y)
\end{align*}
which measures the worst possible behavior in $0$ of $\Phi^S(X,Y,\eps)$ when $\beta(Y)\leq\rho$.  From Theorem \ref{th:est_beta} we know that
\begin{align*}
\Psi(\rho)\leq \sqrt{\frac{1+\rho}{1-\rho}} .
\end{align*}
Now we will estimate  $\Psi(\rho)$  from below.
\begin{theorem}
\begin{align*}\Psi(\rho)\geq \min\left\{\sqrt{\frac{2\rho}{1-\rho}},1\right\}
\end{align*}
for all values of $\rho\in (0, 1)$.
\end{theorem}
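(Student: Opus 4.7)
The plan is to reduce the desired inequality to a much weaker statement that already follows from the earlier estimates. The key observation is that $\min\{\sqrt{2\rho/(1-\rho)},\,1\} \leq 1$ for every $\rho \in (0,1)$, so it is enough to establish the bound $\Psi(\rho) \geq 1$ for every such $\rho$.

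For this I would invoke Theorem~\ref{th:est_beta2}, which states that
$\Phi^S(\ell^{(2)}_1, Y, \eps) \geq \min\{\sqrt{2\eps},\,1\}$ for \emph{every} Banach space $Y$. For $\eps<1/2$ the minimum equals $\sqrt{2\eps}$, so dividing by $\sqrt{2\eps}$ yields $\Phi^S(\ell^{(2)}_1, Y, \eps)/\sqrt{2\eps} \geq 1$ on the whole interval $(0,1/2)$. Passing to $\limsup$ as $\eps \to 0$ then produces $\Psi(\ell^{(2)}_1, Y) \geq 1$.

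To feed this into the definition of $\Psi(\rho)$, I need only to exhibit for each $\rho\in(0,1)$ a concrete witness space $Y$ satisfying $\beta(Y) \leq \rho$ (the constraint in the $\sup$, as clarified in the text following the definition of $\Psi(\rho)$). For $\rho \in [1/2, 1)$ the two-dimensional hexagonal space $Y_\rho$ defined in \eqref{eq_norm} does the job by the corresponding proposition. For $\rho \in (0, 1/2)$, where $Y_\rho$ is no longer available, I pick any integer $n \geq 1/\rho$ and take $Y = Z^{(n)}_\rho$; the proposition immediately preceding the theorem gives $\beta(Z^{(n)}_\rho) \leq \rho$. In either case,
\[
\Psi(\rho) \;\geq\; \Psi(\ell^{(2)}_1, Y) \;\geq\; 1 \;\geq\; \min\left\{\sqrt{\tfrac{2\rho}{1-\rho}},\,1\right\}.
\]

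There is essentially no obstacle in this argument: the ``$\min$ with $1$'' in the statement makes the claim no stronger than the bound $\Psi(\rho)\geq 1$ already supplied by Theorem~\ref{th:est_beta2} once any admissible witness $Y$ is produced; splitting into the two ranges of $\rho$ is the only verification step. A more ambitious plan would aim at the stronger bound $\Psi(\rho) \geq \sqrt{2\rho/(1-\rho)}$ without the $\min$, which for $\rho \geq 1/3$ would exceed $1$ and thus be non-trivial; for that one would try to extend the construction of Theorem~\ref{th-hexagon} to the higher-dimensional spaces $Z_\rho^{(n)}$, and there the hard part would be to identify in $B_{Z_\rho^{(n)}}$ two adjacent faces meeting at a single extreme point (or a very small face) playing the role of the vertex $b$ of the hexagon, so that an analogous two-case analysis could be carried through.
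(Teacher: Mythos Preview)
Your argument is correct for the theorem \emph{as stated}: since $\min\{\sqrt{2\rho/(1-\rho)},1\}\le 1$, the bound $\Psi(\rho)\ge 1$ coming from Theorem~\ref{th:est_beta2} (applied with any admissible witness $Y$) already suffices. Your production of witnesses $Y_\rho$ or $Z^{(n)}_\rho$ to satisfy the constraint in the supremum is fine; in fact the paper's own proof opens with exactly the sentence ``From Theorem~\ref{th:est_beta2} we know that $\Psi(\rho)\ge 1$'', so on this point you and the paper agree.

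The difference is that the paper then does substantially more work and actually establishes the stronger inequality $\Psi(\rho)\ge \sqrt{2\rho/(1-\rho)}$ \emph{without} the minimum, which is genuinely new information for $\rho>1/3$. The construction is precisely of the kind you outline in your last paragraph: one works in the pair $(\ell_1^{(2)}, Z^{(n)}_\rho)$, takes $x=(1-\delta,\delta)$ and an operator $T$ sending $e_1$ to the centre $\rho\sum e_i$ of the ``diagonal'' face $\Gamma=\{z^*=1\}$ and $e_2$ to a carefully chosen point just outside $\Gamma$, and then runs the familiar two-case analysis (approximate $x$ by $e_1$, or force $F(e_1),F(e_2)$ into the same face $\Gamma$). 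Balancing the two cases and letting $n\to\infty$ removes the $O(1/n)$ defect and yields $\sqrt{2\rho/(1-\rho)}$. So your shortcut buys brevity but loses the real content; the paper's approach buys the sharper asymptotic lower bound at the cost of the explicit construction.
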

\begin{proof} 
From Theorem \ref{th:est_beta2} we know that  $\Psi(\rho)\geq 1$. So, we have to check that $\Psi(\rho)\geq \sqrt{\frac{2\rho}{1-\rho}}$. 
In order to estimate  $\Psi(\rho)$  from below for small $\eps$ we consider the couple of spaces $(\ell_1^{(2)}, Z^{(n)}_\rho)$.
Denote $z^*=\frac{1}{\rho n}\sum \limits_{i=1}^{n}e_i$ and $\Gamma=\{x\in S_Z : z^*(x)=1\}$.
Consider the point $x=(1-\delta, \delta)$ and the operator $T$ such that 
$$T(e_1) = \rho\sum \limits_{i=1}^{n} e_i \,\,\text{and}\,\, T(e_2) =  t  \sum\limits_{i=1}^{k} e_i 
+\sum\limits_{i=k+1}^{n}e_i,$$
with  $k=\frac{1}{2}n(1-\rho)+1+\theta \in \N$ being the nearest natural to $\frac{1}{2}n(1-\rho)+1$ (so, $|\theta| \leq 1/2$) and
\begin{align}\label{eq_t}
t=-1+\frac{4+4\theta -2n\rho\frac{\eps_0}{\delta}}{n-n\rho+2+2\theta}\,,
\end{align}
where $\delta>0$ will be defined later and $\eps_0 < \eps$.
Then $z^*(T(x))=1-\eps_0>1-\eps$, so $(x, T)\in \Pi^S_\eps(\ell_1^{(2)}, Z^{(n)}_\rho)$. Now we are searching for the best approximation of $(x, T)$ by a pair $(y, F)\in \Pi(\ell_1^{(2)}, Z^{(n)}_\rho)$. As usual, we have two options:

I. We can approximate the point $x$ by $e_1$ and then we can take $F=T$. In this case we get 
\begin{align}\label{eq_point}
\norm{x-y}=2\delta.
\end{align}

II. We can choose  $F$ which attains its norm in all points of the segment $[e_1, e_2]$, and then we can take $y=x$. In this case $F(e_1)$ and $F(e_2)$ must lie in the same face. Besides, if $F(e_1)\not\in \Gamma$,  we have $\norm{T(e_1)-F(e_1)}=1-\rho>\sqrt{2\eps}\sqrt{\frac{2\rho}{1-\rho}} $  for $\eps$ sufficiently small. To obtain better estimation we must have $F(e_1)\in\Gamma$ and, so, $F(e_2)\in\Gamma$. Then
$$\norm{T-F}\geq \norm{T(e_2)-F(e_2)}\geq \underset{h\in \Gamma}{\inf}\norm{T(e_2)-h}.$$

Let us estimate the distance from $T(e_2)$ to the face $\Gamma$. 

If $h=\sum\limits_{i=1}^{n}h_i \in \Gamma$, then $|h_i|\leq 1$ and $z^*(h)=\frac{1}{\rho n}\sum\limits_{i=1}^{n}h_i=1$. So, $\sum\limits_{i=1}^{k}h_i\geq \rho n -(n-k)$, and 
\begin{align*}
\max h_i \geq \frac{1}{k}(\rho n -(n-k))=-1+\frac{4+4\theta}{n(1-\rho)+2+2\theta}.
\end{align*}
Therefore, 
\begin{align}\label{eq_oper}
\norm{T(e_2)-h}\geq \underset{1\leq i\leq k}{\max}|t-h_i|\geq |t- \max h_i|= 
\frac{2n\rho\frac{\eps_0}{\delta}}{n(1-\rho)+2+2\theta}.
\end{align}

Now let us define $\delta$ as a positive solution of the equation:
$$2\delta=\frac{2n\rho\frac{\eps_0}{\delta}}{n(1-\rho)+2+2\theta}.$$
Then $\delta=\frac{1}{2}\sqrt{2\eps_0}\sqrt{\frac{2\rho}{1-\rho + (2+\theta)/n}}$. Denote $C(\eps,\rho,n, \theta):= \sqrt{2\eps}\sqrt{\frac{2\rho}{1-\rho + (2+\theta)/n}}.$ and $C_0=C(\eps_0,\rho,n, \theta)$. So, with this $\delta$ the estimation \eqref{eq_point} gives us 
$$ \norm{x-y}=2\delta=C_0,$$
and the estimation \eqref{eq_oper} gives us 
$$\norm{T-F}\geq \frac{2n\rho\frac{\eps_0}{\delta}}{n(1-\rho)+2+2\theta}=C_0.$$
In that way, we have shown that $\Phi^S(\ell_1^{(2)}, Z^{(n)}_\rho,\eps)\geq C_0$. As $\eps_0$ can be chosen arbitrarily close to $\eps$ we obtain that $\Phi^S(\ell_1^{(2)}, Z^{(n)}_\rho,\eps)\geq C(\eps,\rho,n, \tilde{\theta})$ with $\tilde{\theta}\in [-1/2, 1/2]$. Consequently, we have that $\Psi(\ell_1^{(2)}, Z^{(n)}_\rho)\geq \sqrt{\frac{2\rho}{1-\rho + (2+\tilde{\theta})/n}}$. When $n\rightarrow \infty$, we obtain the desired estimation $\Psi(\rho)\geq\sqrt{\frac{2\rho}{1-\rho}}$.
\end{proof}

\section{Modified Bishop-Phelps-Bollob\'{a}s moduli for operators} \label{sec4}
The following modification of the Bishop-Phelps-Bollob\'{a}s theorem can be easily deduced from Proposition \ref{Corollary2.2Phelps} just by substituting $\eta = \eps, k = \sqrt{\eps}$.
%In order to obtain better estimations we also examined the modified Phelps-Bollob\'as modulus. 

\begin{theorem}[Modified Bishop-Phelps-Bollob\'{a}s theorem]\label{Modifth:BPB}
Let $X$ be a Banach space. Suppose $x\in B_X$ and $x^*\in B_{X^*}$
satisfy $ x^*(x) \geq 1 -  \eps$ ($\eps \in (0, 2)$). Then there exists $(y,y^*)\in S_X\times X^*$ with $\|y^*\|=y^*(y)$ such that
\begin{equation*} 
 \max\{\|x-y\|, \|x^*-y^*\|\} \leq\sqrt{\eps}.
 \end{equation*}
\end{theorem}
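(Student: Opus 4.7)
The plan is a one-shot application of Lemma \ref{LemmaBPB}, with the balancing choice $k = \sqrt{\eps}$ pointed to by the hint, after disposing of the trivial range $\eps \geq 1$ by hand.

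\emph{The range $\eps \in [1, 2)$.} Here $\sqrt{\eps} \geq 1$, so I would simply take $y^* := 0$ (which attains its norm at every point) together with $y := x/\|x\|$ when $x \neq 0$, or any $y \in S_X$ when $x = 0$. Then $\|x - y\| \leq 1 \leq \sqrt{\eps}$ and $\|x^* - y^*\| = \|x^*\| \leq 1 \leq \sqrt{\eps}$. The degenerate sub-case $x^* = 0$ is subsumed, since the hypothesis $x^*(x) \geq 1 - \eps$ then forces $\eps \geq 1$ anyway.

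\emph{The main range $\eps \in (0, 1)$.} Here $x^* \neq 0$ automatically, because $\|x^*\| \geq x^*(x) \geq 1 - \eps > 0$. The value $k := \sqrt{\eps}$ lies in $(0, 1)$, so Lemma \ref{LemmaBPB} applied to $(x, x^*)$ furnishes $y \in S_X$ and $y^* \in X^*$ with $y^*(y) = \|y^*\|$,
\[
\|x - y\| < \frac{1 - (1-\eps)/\|x^*\|}{\sqrt{\eps}}, \qquad \|x^* - y^*\| < \sqrt{\eps}\, \|x^*\|.
\]
Since $\|x^*\| \leq 1$, the numerator of the first bound satisfies $1 - (1-\eps)/\|x^*\| \leq 1 - (1-\eps) = \eps$, whence that quotient is at most $\sqrt{\eps}$; the second bound is trivially at most $\sqrt{\eps}$.

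The proof is essentially computational and there is no real obstacle: the only identity one needs is $1 - (1-\eps)/\|x^*\| \leq \eps$, which is immediate from $x^* \in B_{X^*}$ and quantifies why allowing $\|x^*\| < 1$ in Phelps' estimate costs nothing provided one does not insist on $\|y^*\| = 1$. The weak inequality $\leq \sqrt{\eps}$ in the conclusion is free since Lemma \ref{LemmaBPB} delivers strict inequalities $<$; should the boundary configuration $x^*(x) = 1 - \eps$ with $\|x^*\| = 1$ cause concern, one applies the lemma to a slightly larger $\eps' > \eps$ and lets $\eps' \to \eps^+$.
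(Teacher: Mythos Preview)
Your proof is correct and follows the same approach as the paper, which merely hints to apply Proposition~\ref{Corollary2.2Phelps} with $\eta = \eps$, $k = \sqrt{\eps}$. You are in fact more careful than the paper's one-line sketch: you invoke Lemma~\ref{LemmaBPB} (the $B_{X^*}$-version, needed since the theorem allows $\|x^*\|<1$), you separate out the trivial range $\eps \geq 1$ (where $k=\sqrt{\eps}$ would fall outside $(0,1)$), and you flag the boundary configuration $x^*(x)=1-\eps$.
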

The improvement in this estimate comparing to the original version appears because we do not demand
$\|y^*\|=1$.  It was shown in \cite{KadS} that this theorem is sharp in a number of two-dimensional spaces, which makes a big difference with the original  Bishop-Phelps-Bollob\'{a}s theorem, where the only (up to isometry) two-dimensional space, in which the theorem is sharp, is $\ell_1^{(2)}$. Bearing in mind this theorem it is natural to introduce the following quantities.
\begin{definition}
 \textit{The  modified Phelps-Bollob\'as modulus} of a pair $(X, Y)$ is the function, which is determined by the following formula:
\begin{align*}
\widetilde{\Phi}(X,Y,\varepsilon)= \inf \lbrace \delta>0: \text{for every} \, T\in B_{L(X,Y)} ,  \mbox{ if } x\in B_X \mbox{ and } \|T(x)\|> 1-\eps, \, \mbox{ then there exist} \\
y\in S_X \mbox{ and } F\in L(X,Y)  \mbox{ satisfying }       
\, { \|F(y)\|=\norm{F}}, \|x-y\|<\delta \text{ and } \|T-F\|< \delta \rbrace.
\end{align*}

\textit{The modified spherical Bishop-Phelps-Bollob\'as modulus} of a pair $(X, Y)$ is the function, which is determined by the following formula:
\begin{align*}
\widetilde{\Phi^S}(X,Y,\varepsilon)= \inf \lbrace \delta>0: \text{for every} \, T\in S_{L(X,Y)} ,  \mbox{ if } x\in S_X \mbox{ and } \|T(x)\|> 1-\eps, \, \mbox{ then there exist} \\
y\in S_X \mbox{ and } F\in L(X,Y)  \mbox{ satisfying }       
\, \|T(y)\|=\norm{T}, \|x-y\|<\delta \text{ and } \|T-F\|< \delta\rbrace.
\end{align*}

\end{definition}

By analogy with Theorem \ref{th:est_beta} we prove the next result. 

\begin{theorem}\label{modif_th:est_beta}
Let $X$ and $Y$ be Banach spaces such that $Y$ has the property $\beta$ with parameter $\rho$. Then the pair $(X,Y)$ has the Bishop-Phelps-Bollob\'as property for operators and for any $\varepsilon\in(0, 1)$
\begin{equation} \label{equat:modif_th:est_beta}
\widetilde{\Phi^S}(X,Y,\varepsilon)\leq\widetilde{\Phi}(X,Y,\varepsilon)\leq \min \left\lbrace\sqrt{\eps}\,\sqrt{\frac{1+\rho}{1-\rho}}, \,1\right\rbrace .
\end{equation}
\end{theorem}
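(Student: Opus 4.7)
The plan is to mimic the proof of Theorem \ref{th:est_beta} step for step, with the single structural modification that the final operator is \emph{not} rescaled to have norm $1$. Since the definition of $\widetilde{\Phi}$ only asks that $F$ attain its norm at $y$, and not that $\|F\|=1$, we can discard the normalization error $\|S-S/\|S\|\|=\eta$ that shows up in the triangle inequality of the original proof, and this is precisely the source of the $\sqrt{2}$ improvement one sees when comparing Theorem \ref{Modifth:BPB} with Proposition \ref{Prop1.2}.

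In detail, given $(x,T)\in B_X\times B_{L(X,Y)}$ with $\|T(x)\|>1-\eps$, I would first use property $\beta$ to pick $\alpha_0\in\Lambda$ with $|y^*_{\alpha_0}(Tx)|>1-\eps$ and, after an obvious sign change, take $x^*:=T^*(y^*_{\alpha_0})\in B_{X^*}$ with $x^*(x)>1-\eps$. Next I would apply Lemma \ref{LemmaBPB} (using the sharper \eqref{eqBPB1} version, which does not normalize the output functional) with a free parameter $k$ to extract $(z,y^*)\in S_X\times X^*$ satisfying $y^*(z)=\|y^*\|$, $\|x-z\|<\eps/k$ and $\|y^*-x^*\|<k\|x^*\|\le k$. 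With this data I would form the familiar Lindenstrauss-type perturbation
\[
F(v) := T(v) + \bigl[(1+\eta)y^*(v) - x^*(v)\bigr]\,y_{\alpha_0}
\]
and select $\eta\ge 0$ as the least value ensuring that $\|F^*(y^*_{\alpha_0})\|=(1+\eta)\|y^*\|$ dominates every $\|F^*(y^*_\alpha)\|\le 1+\rho\|(1+\eta)y^*-x^*\|$ for $\alpha\neq\alpha_0$; the calculation from Theorem \ref{th:est_beta} then shows $F$ attains its norm at $z$. Taking $F$ itself as the approximating operator and invoking the triangle inequality gives
\[
\|T-F\| \le \eta\|y^*\|+\|y^*-x^*\| \le \eta\|y^*\|+k\|x^*\|,
\]
and I would optimize $k$ to balance this against $\eps/k$, falling back on the trivial bound $\|x-z\|\le 2$, $\|T-F\|\le 2$ to recover the $\min\{\cdot,1\}$ in the statement. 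The spherical inequality $\widetilde{\Phi^S}\le\widetilde{\Phi}$ is automatic from the definitions.

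The main obstacle I foresee is the bookkeeping for the optimization: because Lemma \ref{LemmaBPB} delivers $y^*$ with $\|y^*\|$ lying in an interval of width roughly $2k\|x^*\|$ rather than pinned at $1$, the minimal admissible $\eta$ depends jointly on $\|y^*\|$ and $\|x^*\|$ in a way that a naive worst-case analysis over-estimates the $\|T-F\|$ bound, producing about $\sqrt{2\eps/(1-\rho)}$ instead of the advertised $\sqrt{\eps(1+\rho)/(1-\rho)}$. Recovering the extra $\sqrt{(1+\rho)/2}$ saving should amount to coupling the choice of $k$ with the position of $\|x^*\|$ in $[1-\eps,1]$ (using that the estimate on $\|x-z\|$ in \eqref{eqBPB1} is really $(\|x^*\|-1+\eps)/(k\|x^*\|)$, which is tightest exactly when $\|x^*\|$ is close to $1-\eps$) and observing that the extremal values of $\|y^*\|$ and $\|x^*\|$ cannot be attained simultaneously.
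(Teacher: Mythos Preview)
Your plan is essentially the paper's plan, and you have correctly diagnosed the obstacle: without normalizing the output functional $y^*$ of Lemma~\ref{LemmaBPB}, the quantity $\|y^*\|$ floats in an interval and spoils a naive worst-case estimate. You also correctly anticipate half of the fix, namely coupling $k$ to $\|x^*\|$ via $k\|x^*\|=\tilde k\,(\|x^*\|-(1-\eps))/\eps$ for a new free parameter $\tilde k\in[\eps,1)$.

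What you are missing is a second, structural modification to the perturbed operator. With your $F(v)=T(v)+[(1+\eta)y^*(v)-x^*(v)]y_{\alpha_0}$ one gets $\|F^*(y^*_{\alpha_0})\|=(1+\eta)\|y^*\|$ but $\|F^*(y^*_\alpha)\|\le 1+\rho(\cdots)$ for $\alpha\neq\alpha_0$: the leading terms $\|y^*\|$ and $1$ do not match, and when $\|y^*\|<1$ you are forced to take $\eta$ too large. The paper's remedy is to scale $T$ itself by $\|y^*\|$ in the perturbation, setting
\[
S(v)=\|y^*\|\,T(v)+\bigl[(1+\eta)y^*(v)-\|y^*\|\,x^*(v)\bigr]y_{\alpha_0},
\]
so that for $\alpha\neq\alpha_0$ the leading term becomes $\|y^*\|$ as well. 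With this change, the minimal admissible $\eta$ and the resulting $\|T-S\|$ both come out to $\tilde k(1+\rho)/(1-\rho)$ after two short monotonicity checks in the variable $t=\|x^*\|\in(1-\eps,1]$ (one for $\alpha=\alpha_0$, one for $\alpha\neq\alpha_0$), and the substitution $\tilde k=\sqrt{\eps(1-\rho)/(1+\rho)}$ balances against $\|x-z\|<\eps/\tilde k$.

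One minor correction: the fallback ``trivial bound $\le 2$'' does not yield the $\min\{\cdot,1\}$ in the statement. For $\eps\ge(1-\rho)/(1+\rho)$ the paper instead approximates $(x,T)$ by $(x/\|x\|,\,0)$; the zero operator vacuously attains its norm everywhere, $\|T-0\|\le 1$, and $\|x-x/\|x\|\|=1-\|x\|<\eps<1$.
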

The proof is similar to Theorem \ref{th:est_beta} but it has some modifications and we give it here for the sake of clearness.

\begin{proof}
Consider $T\in B_{L(X,Y)}$ and $x\in B_X$ such that $\|T(x)\|> 1-\eps$ with $\eps\in\left(0, \frac{1-\rho}{1+\rho}\right)$.
Since $Y$ has the property $\beta$, there is $\alpha_0\in \Lambda$ such that $|y^*_{\alpha_0}(T(x))|> 1-\eps$.
So, if we denote $x^*=T^*y^*_{\alpha_0}$, we have $x\in B_X, x^*\in B_{X^*}$ with $x^*(x)>1-\eps$. We can apply the  formula \eqref{eqBPB1} from Lemma \ref{LemmaBPB}, for
any  $k \in (0, 1)$. For every $\tilde{k}\in [\eps, 1)$ let us take 
$$k=\frac{ \tilde{k} (\norm{x^*} - (1-\eps))}{\eps \norm{x^*}}.$$
The inequality $\norm{x^*} \ge x^*(x) > 1-\eps$ implies that $k > 0$. On the other hand, $k=\tilde{k}\left(\frac{1}{\eps} - \frac{(1-\eps)}{\eps  \norm{x^*}}\right)\le \tilde{k}\left(\frac{1}{\eps} - \frac{(1-\eps)}{\eps}\right) = \tilde{k} < 1$, so for this $k$ we can find  $\zeta^*\in X^*$ and $z\in S_X$  such that 
there exist $z^*\in X^*$ and $z\in S_X$ such that $|z^*(z)|=\norm{z^*}$ and 
\begin{align*}
\|x-z\|< \frac{1-\frac{1-\eps}{\norm{x^*}}}{k} \,\,\text{and}\,\,\|z^*-x^*\|<k\norm{x^*}.
\end{align*}

For a real number $\eta$ satisfying $\eta> \frac{\rho(k\|x^*\|+\|x^*\|\cdot|1-\|z^*\||)}{\|z^*\|(1-\rho)}$ we define the operator $S\in L(X,Y)$ by the formula
$$S(t) = \norm{z^*} T(t)+[(1+\eta)z^*(t)-\norm{z^*} T^*(y^*_{\alpha_0})(t)]y_{\alpha_0}.$$
 Let us estimate the norm of $S$. Recall that we denote $x^*=T^*y^*_{\alpha_0}$. Thus for all $y^*\in Y^*$,
$$
S^*(y^*) = \norm{z^*}T^*(y^*)+[(1+\eta)z^* -\norm{z^*} x^*]y^*(y_{\alpha_0}).
$$
Since the set $\{y^*_{\alpha}: \alpha \in \Lambda\}$ is norming for $Y$ it follows that $\norm{S} = \sup_\alpha \norm{S^*{y^*_{\alpha}}}$.
$$\norm{S} \geq \norm{S^*(y^*_{\alpha_0})} = (1+\eta)\norm{z^*}.$$
On the other hand for $\alpha \neq \alpha_0$ we obtain
$$\norm{S^*(y^*_{\alpha})}\leq \norm{z^*} +\rho[\|z^*-x^*\|+\|x^*\|\cdot |1-\|z^*\|| + \eta\|z^*\|]\leq(1+\eta)\norm{z^*}.$$
Therefore, 
$$\norm{S} = \norm{S^*(y^*_{\alpha_0})} = \norm{z^*}=|y^*_{\alpha_0}(S(z))|\leq \norm{S(z)}\leq\norm{S}.
$$
So, we have $\|S\|=\|S(z)\|=(1+\eta)\norm{z^*}$.

Let us estimate $\|S-T\|$.
$$\norm{S-T} = \sup_\alpha \norm{S^*y^*_{\alpha}-T^*y^*_{\alpha}}.$$ 
Notice also that
\begin{align*}|1-\norm{z^*}|\leq \norm{x^*-z^*} + 1 - \norm{x^*}< k\|x^*\|+ 1 - \norm{x^*}.
\end{align*}
For $\alpha = \alpha_0$ we get 
\begin{align*}
\norm{S^*y^*_{\alpha_0}-T^*{y^*_{\alpha_0}}}=\|(1+\eta)z^*-x^*\|
\leq\|z^*-x^*\|+\eta\|z^*\|\\
<\frac{k\|x^*\|(1+\rho\|x^*\|)+\rho\|x^*\|(1-\|x^*\|)}{1-\rho}.
\end{align*}
Then $\|x-z\|<\frac{\eps}{\tilde{k}}$ and
\begin{align*}
\norm{S^*y^*_{\alpha_0}-T^*{y^*_{\alpha_0}}}
<\frac{\tilde{k}\frac{\|x^*\|-(1-\eps)}{\eps}(1+\rho\|x^*\|)+\rho\|x^*\|(1-\|x^*\|)}{1-\rho}
\leq\frac{\tilde{k}(1+\rho)}{1-\rho}.
\end{align*}
The last inequality holds, because  if we consider the function $$f(t)=\frac{\tilde{k}\frac{t-(1-\eps)}{\eps}(1+\rho t)+\rho t(1-t)}{1-\rho}$$ with $t\in (1-\eps, 1)$, then ${f'}\geq 0$, so $\max f = f(1)=\frac{\tilde{k}(1+\rho)}{1-\rho}$.  For $\alpha \neq \alpha_0$ we obtain
\begin{align*}
\norm{S^*y^*_{\alpha}-T^*{y^*_{\alpha}}}&\leq |1-\|z^*\||+\rho (\|z^*-x^*\| +\|x^*\|\cdot|1-\|z^*\||+\eta \|z^*\| )\\
&< k\|x^*\| + 1-\|x^*\|+\frac{\rho}{1-\rho}[k\|x^*\|-\rho k \|x^*\|+\|x^*\|\cdot|1-\|z^*\|| \\
&-\rho\|x^*\|\cdot|1-\|z^*\||+\rho k \|x^*\|+ \rho\|x^*\|\cdot|1-\|z^*\||]\\
&\leq k\|x^*\| + 1-\|x^*\|+\frac{\rho}{1-\rho}[k\|x^*\| +\|x^*\|\cdot(k\|x^*\|+1-\|x^*\|)].
\end{align*} 
Substituting the value of ${k}$ we get 
\begin{align*}
\norm{S^*y^*_{\alpha}-T^*{y^*_{\alpha}}}&< \frac{1-\rho(1-\|x^*\|)}{1-\rho}\left[\tilde{k}\frac{\|x^*\|-(1-\eps)}{\eps}+1-\|x^*\|\right]\\
&+\frac{\rho \tilde{k}}{1-\rho}\frac{\|x^*\|-(1-\eps)}{\eps}\leq\frac{\tilde{k}(1+\rho)}{1-\rho}.
\end{align*}

To get the last inequality we again use the fact that the function 
$$f_1(t)=\frac{1-\rho(1-t)}{1-\rho}\left[\tilde{k}\frac{t-(1-\eps)}{\eps}+1-t\right]
+\frac{\rho \tilde{k}}{1-\rho}\frac{t-(1-\eps)}{\eps}$$ 
 is increasing if $\tilde{k} \geq\eps$, so $\max f_1 = f_1(1)=\frac{\tilde{k}(1+\rho)}{1-\rho}$. So, $\|T-S\|\leq\frac{\tilde{k}(1+\rho)}{1-\rho}$.
 
Let us substitute $\tilde{k}=\sqrt{\frac{\eps(1-\rho)}{1+\rho}}$ (here we need $\eps<\frac{1+\rho}{1-\rho}$ which holds for any $\eps\in (0,1)$ and also $\eps<\frac{1-\rho}{1+\rho}$). Then we obtain 
$$\max\{\|z-x\|, \|T-S\|\}\leq\sqrt{\frac{\eps(1+\rho)}{1-\rho}}.$$
Finally, if $\eps\geq\frac{1-\rho}{1+\rho}$, we can always approximate $(x, T)$ by the same point and an zero operator, so $\max\{\|z-x\|, \|T-S\|\}\leq 1.$
\end{proof}

The above theorem implies that if $\beta(Y)=0$, then $\widetilde{\Phi^S}(X,Y,\varepsilon)\leq\widetilde{\Phi}(X,Y,\varepsilon)\leq\sqrt{\varepsilon}$. We are going to demonstrate that this estimation is sharp for $X=\ell^{(2)}_1, Y=\R$.

\begin{theorem}
\quad $\widetilde{\Phi^S}(\ell^{(2)}_1,\R,\varepsilon) = \widetilde{\Phi}(\ell^{(2)}_1,\R,\varepsilon) =\sqrt{\varepsilon} , \,\eps\in(0, 1).$
\end{theorem}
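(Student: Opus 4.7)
The upper bound $\widetilde{\Phi}(\ell_1^{(2)},\R,\eps)\le\sqrt\eps$ is a direct instance of Theorem~\ref{modif_th:est_beta} applied with $\rho=0$ (the space $\R$ has property $\beta$ with $\rho=0$), so I would devote the proof to the matching lower bound $\widetilde{\Phi^S}(\ell_1^{(2)},\R,\eps)\ge\sqrt\eps$.

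Given $\eps\in(0,1)$ and an auxiliary $\delta'>0$, choose $\eps_0\in(0,\eps)$ with $\sqrt{\eps_0}>\sqrt\eps-\delta'$. The natural test pair, suggested by the sharpness example for the Modified Bishop--Phelps--Bollob\'as Theorem on $\ell_1^{(2)}$, is
\[
x=\left(1-\tfrac{\sqrt{\eps_0}}{2},\ \tfrac{\sqrt{\eps_0}}{2}\right)\in S_{\ell_1^{(2)}},\qquad T(z_1,z_2)=z_1+(1-2\sqrt{\eps_0})z_2.
\]
A direct check gives $\|T\|=1$ (because $|1-2\sqrt{\eps_0}|<1$ for $\eps_0\in(0,1)$) and $T(x)=1-\eps_0>1-\eps$, so $(x,T)\in\Pi^S_\eps(\ell_1^{(2)},\R)$.

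The core of the proof is to show that for every pair $(y,F)$ with $y\in S_{\ell_1^{(2)}}$ and $|F(y)|=\|F\|$,
\[
\max\{\|x-y\|,\ \|T-F\|\}\ge\sqrt{\eps_0}.
\]
I would split according to where $F$ attains its norm on $B_{\ell_1^{(2)}}$. If $y$ is an extreme point, then $y\in\{\pm e_1,\pm e_2\}$; the only candidate close to $x$ is $y=e_1$, and a one-line computation gives $\|x-e_1\|=\sqrt{\eps_0}$, while the other three choices yield $\|x-y\|\ge 1$. If $y$ lies in the relative interior of an edge of $B_{\ell_1^{(2)}}$, then $F$ must attain its norm on the whole edge, which forces $F(e_1)=\pm F(e_2)$ with matching signs dictated by the edge. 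One then minimizes $\|T-F\|=\max(|1-F(e_1)|,|(1-2\sqrt{\eps_0})-F(e_2)|)$ over the admissible $F$. The most dangerous edge is $[e_1,e_2]$, where $F(e_1)=F(e_2)=c$: balancing gives the optimal $c=1-\sqrt{\eps_0}$ with $\|T-F\|=\sqrt{\eps_0}$. For the three remaining edges a quick sign analysis shows $\|T-F\|\ge 1-\sqrt{\eps_0}>\sqrt{\eps_0}$ once $\eps_0<1/4$ (and trivially $\ge 1$ otherwise, since for $\eps\in(0,1)$ the claim is only nontrivial when $\sqrt\eps<1$, in particular for small $\eps$; the range $\eps\in[1/4,1)$ is handled directly by using the $y=e_1$ bound combined with the same edge $[e_1,e_2]$ estimate).

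Combining the two cases we obtain $\widetilde{\Phi^S}(\ell_1^{(2)},\R,\eps)\ge\sqrt{\eps_0}>\sqrt\eps-\delta'$; letting $\delta'\to 0$ gives $\widetilde{\Phi^S}(\ell_1^{(2)},\R,\eps)\ge\sqrt\eps$, which together with the upper bound yields equality for both moduli. The main obstacle is bookkeeping: one has to verify that the edge $[e_1,e_2]$ truly is the worst of the four edges (and also beats the extreme-point case) for \emph{every} $\eps\in(0,1)$, so that no accidentally-better approximating pair exists; the rest reduces to elementary optimization of one-variable functions of $c$.
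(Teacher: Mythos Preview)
Your test pair and overall strategy coincide with the paper's, and the extreme-point case and the edge $[e_1,e_2]$ case are handled correctly. The gap is in your treatment of the ``three remaining edges'' for $\eps_0\ge 1/4$: the parenthetical remark that this range ``is handled directly by using the $y=e_1$ bound combined with the same edge $[e_1,e_2]$ estimate'' does not address the case at hand. You must show that \emph{every} admissible pair $(y,F)$ satisfies $\max\{\|x-y\|,\|T-F\|\}\ge\sqrt{\eps_0}$; knowing that the infimum is \emph{achieved} at $y=e_1$ or on $[e_1,e_2]$ with value $\sqrt{\eps_0}$ is not enough---you still owe a lower bound for $(y,F)$ with $y$ in the interior of, say, $[e_1,-e_2]$, and for such $y$ your operator bound $\|T-F\|\ge 1-\sqrt{\eps_0}$ is weaker than $\sqrt{\eps_0}$ once $\eps_0\ge 1/4$.

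The fix is immediate once you switch to the $\|x-y\|$ side: a direct computation shows that every $y\in S_{\ell_1^{(2)}}$ with $\|x-y\|<\sqrt{\eps_0}$ must lie in the open segment $]e_1,e_2[$ (the nearest point of any other edge or vertex is at distance $\ge\sqrt{\eps_0}$). The paper organizes the argument exactly this way: it first defines $U=\{y\in S_X:\|x-y\|<\sqrt{\eps_0}\}$, observes $U\subset]e_1,e_2[$, and then only has to analyze functionals attaining their norm at an interior point of that one edge. This single preliminary observation replaces your four-edge case split and closes the gap uniformly for all $\eps\in(0,1)$.
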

\begin{proof}
We must show that for every $0<\eps<1$ and for every $\delta> 0$ there is a pair $(x, x^*)$ from $\Pi^S_\eps(\ell^{(2)}_1, \R)$ such that for every pair $(y, y^*)\in S_{\ell^{(2)}_1}\times \ell^{(2)}_\infty$ with $|y^*(y)|=\norm{y^*}$
\begin{equation}\label{mod_eq2}
\max\left\{ \norm{x-y}, \norm{x^*-y^*}\right\}\geq \sqrt{\varepsilon}-\delta.
\end{equation}

Fix an $\eps_0<\eps$ such that $\sqrt{\eps_0}>\sqrt{\eps}-\delta$. Take a point $x:=\left( 1-\frac{\sqrt{\eps_0}}{2}\right)e_1 +\left( \frac{\sqrt{\eps_0}}{2} \right)e_2,$
and a functional 
$x^*(z):=z_1+(1-2\sqrt{\eps_0})z_2.$
Notice that $x^*(x)=1-\eps_0>1-\eps$. 

Consider the set $U$ of those $y \in S_X$ that  $\|x-y\| <\sqrt{\eps_0}$. $U$ is the intersection of $S_X$ with the open ball of radius $\sqrt{\eps_0}$  centered in $x$. 
As $\norm{x-e_1}=\sqrt{\eps_0},$ and $\norm{x-e_2}=2 - \sqrt{\eps_0}\geq\sqrt{\eps_0},$ so, $U\subset]e_1, e_2[$, and for every $y=ae_1+be_2 \in U$ $a>0$ and $b>0$.

 Assume that $|y^*(y)|=\|y^*\|$ for some $y\in U$ and $\norm{x^*-y^*}\leq\sqrt{\eps_0}$. Then we are forced to have $y^*=(y^*(e_1), y^*(e_2))$, where $|y^*(e_1)|=|y^*(e_2)| $ and $y^*(e_1)\cdot y^*(e_2)\geq 0$. Notice that  
$$|x^*(e_1)-y^*(e_1)|=|1-y^*(e_1)|\leq \norm{x^*-y^*}\leq \sqrt{\eps}\Rightarrow y^*(e_1)\geq 1-\sqrt{\eps_0},$$
$$|x^*(e_2)-y^*(e_2)|=|1-2\sqrt{\eps_0}-y^*(e_2)|\leq \norm{x^*-y^*}\leq \sqrt{\eps_0}\Rightarrow y^*(e_2)\leq 1-\sqrt{\eps_0}.$$
Then $y^*=(1-\sqrt{\eps_0}, 1-\sqrt{\eps_0})$, so $\norm{x^*-y^*}=\sqrt{\eps_0}>\sqrt{\eps}-\delta$. It follows that inequality \eqref{mod_eq2} holds, as desired. 
\end{proof}

Also with the same space $Y = Y_\rho$  equipped with the norm  \eqref{eq_norm} we have an estimation from below which almost coincides with the estimation \eqref{equat:modif_th:est_beta} from above for values of $\rho$ close to 1.
\begin{theorem} Let $ \rho \in [1/2, 1)$, $0<\eps< 1$. Then, in the space $Y = Y_\rho$ 
$$
\widetilde{\Phi^S}(\ell_1^{(2)},Y,\varepsilon) \geq \min \left\lbrace\sqrt{\eps}\,\sqrt{\frac{2\rho}{1-\rho}}, \,1\right\rbrace.
$$
\end{theorem}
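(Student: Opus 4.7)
The plan is to follow the strategy of Theorem \ref{th-hexagon} almost word-for-word, recycling the same test pair $(x,T)$, and to upgrade only the final geometric inequality. Given $\eps \in (0,\frac{1-\rho}{2\rho})$ and $\delta>0$, choose $\eps_0<\eps$ with $\sqrt{\frac{2\rho\eps_0}{1-\rho}} > \sqrt{\frac{2\rho\eps}{1-\rho}}-\delta$, and write $\mu = \sqrt{\frac{2\rho\eps_0}{1-\rho}}$. The witness is
$$
x = \left(1-\frac{\mu}{2},\,\frac{\mu}{2}\right)\in S_{\ell_1^{(2)}},\qquad T(e_1)=\mu a+(1-\mu)b,\qquad T(e_2)=\mu c+(1-\mu)b,
$$
which lies in $\Pi^S_\eps(\ell_1^{(2)},Y_\rho)$ with $\|T\|=1$, exactly as in Theorem \ref{th-hexagon}. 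The aim is to prove $\max(\|x-y\|,\|T-F\|)\geq \mu$ for every pair $(y,F)$ with $y\in S_{\ell_1^{(2)}}$ and $\|F(y)\|=\|F\|$.

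The vertex case handles itself: since $\|x-v\|\geq \mu$ for every vertex $v$ of $B_{\ell_1^{(2)}}$ (with equality at $v=e_1$), one is reduced to $y$ in the relative interior of a face of $B_{\ell_1^{(2)}}$, and proximity to $x$ forces $y \in (e_1,e_2)$. The hypothesis $\|F(y)\|=\|F\|$ at an interior point of $[e_1,e_2]$ then forces $F$ to attain its norm along the whole segment, so $F(e_1)$ and $F(e_2)$ lie on a common face of the scaled hexagon $\lambda\cdot S_{Y_\rho}$, where $\lambda := \|F\|$. If $\lambda \leq 1-\mu$ then $\|T-F\|\geq |\|T\|-\|F\||\geq \mu$ and we are done, so I may assume $\lambda > 1-\mu$.

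The remaining step is to rule out each of the six face-directions of $\lambda S_{Y_\rho}$, labelled by the vertices $\pm a^*,\pm b^*,\pm c^*$ of $S_{Y_\rho^*}$. For the four directions corresponding to $\pm c^*$ and to $d^*,e^*$, the crude one-dimensional estimate $\|T(e_i)-F(e_i)\|\geq |z^*(T(e_i))-\lambda|$ already delivers $\|T-F\|\geq \mu$: for example $c^*(T(e_1))=-\mu$ forces $\|T(e_1)-F(e_1)\|\geq \mu+\lambda>\mu$, and $d^*(T(e_1))=-1$ forces $\|T(e_1)-F(e_1)\|\geq 1+\lambda$. The delicate sub-cases are $z^*=a^*$ and $z^*=b^*$, where the same one-dimensional estimate only produces $\|T-F\|\geq \mu(1-\rho)/(2\rho)$, strictly weaker than $\mu$. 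This is the main obstacle. My plan is to exploit that $F(e_2)$ is constrained to the \emph{bounded} segment $\lambda[a,b]$ and not merely to its affine hull: parametrising $F(e_2)=(\lambda-(2-1/\rho)q,\,q)$ with $q \in [0,\lambda r]$ and $r=\rho/(3\rho-1)$, the third hexagonal-norm component of $T(e_2)-F(e_2)$ simplifies to $|q(3-1/\rho)-\lambda-\mu|$, and because $q(3-1/\rho)$ ranges over $[0,\lambda]$, this quantity is $\geq \mu$ for every admissible $q$. The sub-case $z^*=b^*$ reduces to the same computation with $T(e_1)$ in place of $T(e_2)$ by the $e_1\leftrightarrow e_2$ symmetry. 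Combined with the previous observations, this yields $\|T-F\|\geq \mu$ in all remaining cases and closes the proof.
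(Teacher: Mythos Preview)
The paper states this theorem without proof, so there is nothing to compare against directly; your argument is correct and is exactly the natural adaptation of the paper's proof of Theorem~\ref{th-hexagon} to the modified modulus, where the approximating operator $F$ is no longer required to have norm~$1$.

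The one genuinely new ingredient you supply is the treatment of the case $y\in(e_1,e_2)$ when the face of $\lambda S_{Y_\rho}$ carrying $[F(e_1),F(e_2)]$ is the one determined by $a^*$ (or, symmetrically, $b^*$). Here the naive one-functional bound $\|T(e_i)-F(e_i)\|\ge |a^*(T(e_i))-\lambda|$ indeed collapses, and your observation that the $c^*$-component satisfies
\[
\bigl|c^*\bigl(T(e_2)-F(e_2)\bigr)\bigr|=\bigl|\,q(3-1/\rho)-\lambda-\mu\,\bigr|\ge \mu
\]
for every admissible $q\in[0,\lambda\rho/(3\rho-1)]$ (since $q(3-1/\rho)$ then ranges over $[0,\lambda]$) is the right way to close this gap. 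The remaining four faces are handled by the crude estimate as you say: for $c^*$ use $c^*(T(e_1))=-\mu$, for $f^*=-c^*$ use $f^*(T(e_2))=-\mu$, for $d^*=-a^*$ use $d^*(T(e_1))=-1$, and for $e^*=-b^*$ use $e^*(T(e_2))=-1$; in each case the distance is at least $\mu+\lambda>\mu$. Together with the disposal of small $\lambda$ via $\|T-F\|\ge 1-\lambda\ge\mu$, this completes the argument. The passage from $\eps\in(0,\tfrac{1-\rho}{2\rho})$ to $\eps\ge\tfrac{1-\rho}{2\rho}$ by monotonicity of $\widetilde{\Phi^S}(\ell_1^{(2)},Y_\rho,\cdot)$, which you leave implicit, is routine and mirrors the corresponding step in Theorem~\ref{th-hexagon}.
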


\section{An open problem} \label{sec5}

\begin{probl}
Is it true, that $\Phi^S(X,\R,\varepsilon)\leq\min\{\sqrt{2\varepsilon}, 1\}$ for all real Banach spaces $X$?
\end{probl}
 
 In order to explain what do we mean, recall that for the original Bishop-Phelps-Bollob\'{a}s modulus the estimation 
\begin{align}\label{eq_functionals}
\Phi_X^S(\varepsilon)\leq\sqrt{2\varepsilon}
\end{align}
holds true for all $X$. In other words, for every $(x,x^*)\in S_X\times S_{X^*}$ with $x^*(x)>1-\eps$, there is $(y,y^*)\in S_X\times S_{X^*}$ with $y^*(y)=1$ such that
$\max\{\|x-y\|<\sqrt{2\varepsilon}, \|x^*-y^*\|\}<\sqrt{2\varepsilon}$.

When we take $Y=\R$ in the definition of $\Phi^S(X, Y, \varepsilon)$, the only difference with 
$\Phi_X^S(\varepsilon)$ is that by attaining norm we mean $|y^*(y)|=1$,
instead of $y^*(y)=1$.  So, in the case of $\Phi^S(X,\R,\varepsilon)$ we have more possibilities to approximate  $(x,x^*)\in S_X\times S_{X^*}$ with $x^*(x) >1-\eps$: 
$$(y,y^*)\in S_X\times S_{X^*} \,\text{with}\, y^*(y)=1 \,\text{or}\, y^*(y)=-1.$$
Estimation \eqref{eq_functionals} is sharp for the two-dimensional real $\ell_1$ space:
\begin{align}\label{eq_problem1}
\Phi_{\ell^{(2)}_1}^S(\varepsilon)=\sqrt{2\varepsilon},
\end{align}
but, as we have shown in Theorem \ref{th:est_beta2}  
\begin{align}\label{eq_problem2}
\Phi^S(\ell^{(2)}_1,\R,\varepsilon)=\min\{\sqrt{2\varepsilon}, 1\}.
\end{align}

Estimations \eqref{eq_problem1} and \eqref{eq_problem2} coincide for $\eps\in (0, 1/2)$, but for bigger values of $\eps$ there is a significant difference. We do not know whether the inequality $\Phi^S(X,\R,\varepsilon)\leq\min\{\sqrt{2\varepsilon}, 1\}$ holds true for all $X$. 

Moreover, in all examples that we considered we always were able to estimate $\Phi^S(X, Y, \eps)$ from above by $1$. So, we don't know whether the statement of Theorem \ref{th:est_beta} can be improved to  
$$\Phi^S(X,Y,\varepsilon)\leq\min\left\{ \sqrt{2\varepsilon}\sqrt{\frac{1+\rho}{1-\rho}},\,\, 1\right\}.$$

\end{document}